\newtheorem{remark}[theorem]{Remark}
\newcommand{\TheTitle}{%
Reduced Krylov Basis Methods for Parametric Partial Differential Equations
}
\newcommand{\TheShortTitle}{%
Reduced Krylov Basis Methods
}
\newcommand{\TheAuthor}{%
  Y. Li, L. Zikatanov, C. Zuo
}
\newcommand{\TheFunding}{The work of Li is partially supported by the Fundamental Research Funds for the Central Universities (no. 226-2023-00039). 
The work of Zikatanov and Zuo is supported in part by NSF DMS-2208249. Zikatanov also acknowledges the support of the U.~S.-Norway Fulbright Foundation.  
}
\headers{\TheShortTitle}{\TheAuthor}
\title{\TheTitle
\thanks{\TheFunding}}
\author{Yuwen Li\thanks{School of Mathematical Sciences, Zhejiang University, Hangzhou, Zhejiang 310058, China 
  (\email{liyuwen@zju.edu.cn}).}
\and Ludmil T. Zikatanov\thanks{Department of Mathematics, The Pennsylvania State University,
University Park, PA 16802, USA
  (\email{ltz1@psu.edu}, \email{cxz5180@psu.edu}).}
\and Cheng Zuo\footnotemark[3]}
\begin{document}

\maketitle


\begin{abstract}
This work is on a user-friendly reduced basis method for solving a family of  parametric PDEs by preconditioned Krylov subspace methods including the conjugate gradient method, generalized minimum residual method, and bi-conjugate gradient method. 
The proposed methods use a preconditioned Krylov subspace method for a high-fidelity discretization of one parameter instance to generate orthogonal basis vectors of the reduced basis subspace. Then large-scale discrete parameter-dependent problems are approximately solved in the low-dimensional Krylov subspace.  As shown in the theory and experiments, only a small number of Krylov subspace iterations are needed to simultaneously generate  approximate solutions of a family of high-fidelity and large-scale systems in the reduced basis subspace. This reduces the computational cost dramatically because (1) to construct the reduced basis vectors, we only solve one large-scale problem in the high-fidelity level; and (2) the family of large-scale problems restricted to the reduced basis  subspace have much smaller sizes. 
\end{abstract}

\begin{keywords}
  reduced basis method, parametric PDEs,  conjugate gradient method, Krylov subspace method, generalized minimum residual method, preconditioning
\end{keywords}

\section{Introduction}\label{sec:intro} In real-world applications such as engineering optimal design, it is often necessary to solve Partial Differential Equations (PDEs)  dependent on varying parameter sequences, e.g., diffusion coefficients, elasticity parameters, viscosity constant, wave propagation speed etc. In  a Hilbert space $\mathcal{H}$,  linear parametric (discrete) PDEs are of the form $A(\mu)u_\mu=f_\mu$, where the solution $u_\mu$ continuously depends on $\mu$ and the parameter $\mu$ varies in a compact parameter set $\mathcal{P}$.
The \emph{Reduced Basis Method} (RBM) is the mainstream numerical method  for efficiently solving parametrized PDEs (cf.~\cite{Maday2006,DeVore2014,Quarteroni2016,HesthavenRozzaStamm2016}). In general, a standard RBM is divided into two stages. During the offline stage, a RBM constructs a small-scale offline reduced basis subspace $\mathcal{H}_m\subset\mathcal{H}$ with potentially expensive computational cost. Then the online stage of the RBM is able to efficiently compute reduced basis solutions $\hat{u}_\mu\approx u_\mu$ in the subspace $\mathcal{H}_m$ for a wide range of 
parameters $\mu$. The RBM could gain significant computational efficiency for many-query parameter-dependent problems provided the number of input parameters in the online stage is large. 

There are two families of classical algorithms for constructing the reduced basis subspace $\mathcal{H}_m$. The first one is the reduced basis greedy algorithm, which utilizes a posteriori error estimation and adaptive feedback loop to select snapshots at parameter instances $\mu_1, \mu_2,\ldots, \mu_m$. The resulting reduced basis subspace is of the form 
\[\mathcal{H}_m={\rm Span}\big\{u_{\mu_1}, u_{\mu_2},\ldots,u_{\mu_m}\big\}.
\]
Another popular offline algorithm is the data-dependent Proper Orthogonal Decomposition (POD). By singular value decomposition, the POD extracts the first $m$ principal components of available training data $u_{\tilde{\mu}_1}, u_{\tilde{\mu}_2},\ldots,u_{\tilde{\mu}_M}$ to span the reduced basis subspace. When $m\ll \dim(\mathcal{H})$, the RBM would be more efficient than directly solving the large-scale problem $A(\mu)u_\mu=f_\mu$ in $\mathcal{H}$ for a large number of $\mu$.

In this work, we propose a novel family of RBMs termed ``\emph{Reduced Kylov Basis Methods}" (RKBMs) based on \emph{Preconditioned Krylov subspace methods}. Fixing parameters $\mu_0, \mu_1$ and a preconditioner $B\approx A(\mu_0)^{-1}$, the offline module of a RKBM  runs $m$-step Krylov subspace method, e.g., the conjugate gradient, generalized minimum residual, or bi-conjugate gradient method for $BA(\mu_1)u_{\mu_1}=Bf$. Let $u_{\mu_1,0}, u_{\mu_1,1}, \ldots, u_{\mu_1,m-1}\approx u_{\mu_1}$ be the corresponding iterative solutions and $r_{j}=f-A(\mu_1)u_{\mu_1,j}$. The reduced basis subspace in the proposed RKBMs is set to be 
\[
\mathcal{H}_m={\rm Span}\big\{Br_0, Br_1, \ldots, Br_{m-1}\big\},
\]
which is indeed the same as the preconditioned Krylov subspace 
\[
\mathcal{K}_m^{\mu_1}:={\rm Span}\big\{Br_0, BA(\mu_1)Br_0, \ldots, (BA(\mu_1))^{m-1}Br_{m-1}\big\}.\] 

The distinct feature of RKBMs is the construction of the reduced basis subspace during the offline stage, which is different from the classical snapshot-based reduced basis algorithms. In particular, the proposed RKBMs are motivated by 
a key observation about the invariance of the Krylov subspace $\mathcal{K}_m^\mu$ with respect to $\mu$ under special parametric structures, i.e., $\mathcal{K}_m^\mu=\mathcal{K}^{\mu_1}_m$ for all parameters $\mu$. In these cases, the online reduced basis solution $\hat{u}_\mu$ turns out to be the iterative solution of a Krylov subspace method at the $m$-th step. As a consequence, we are able to theoretically justify our RKBMs using classical convergence estimates of preconditioned conjugate gradient, generalized minimum residual, and bi-conjugate gradient methods etc. In \cite{LiZikatanovZuo2024}, we used a special preconditioned conjugate gradient method to derive a highly efficient reduced-basis-type solver for fractional Laplacian.

As mentioned earlier, traditional RBMs  make use of solution snapshots at a set of parameter instances to construct $\mathcal{H}_m$. Among classical reduced basis algorithms, greedy-type RBMs are often validated by certified error bounds and solid convergence analysis (cf.~ \cite{RozzaHuynhPatera2008,BinevCohenDahmenDeVore2011,DeVorePetrova2013,CohenDeVore2015,LiSiegel2024}). However, greedy algorithms driven by a posteriori error estimation requires sophisticated implementation to achieve online efficiency. On the other hand, the performance of POD-based RBMs highly depend on the quality of the dataset, which is not available in many applications. In addition, the POD would be computationally expensive for large-scale datasets. In comparison to classical reduced basis approaches,  RKBMs simply set $\mathcal{H}_m$ to be the Krylov subspace, and thus its implementation is simple and similar to those well-known Krylov subpace iterative solvers. Therefore, our RKBMs are user-friendly and snapshot-free algorithms without data requirement. 

The rest of the paper is organized as follows. In Section \ref{secSPD}, we present the RCGBM and its convergence analysis for positive-definite problems with two parameters. In Section \ref{secNSPD}, we propose RKBMs and its convergence analysis for nonsymmetric and indefinite  problems with two parameters. We then discuss efficient variants  of {\rm RKBM}s for more general parametric structures in Section \ref{secmultipara}. To illustrate the effectiveness of the proposed RKBMs, we perform a  series of numerical experiments in Section \ref{secNE}.

\section{Parametric Elliptic Problems}\label{secSPD}
Let $\mathcal{H}$ be a Hilbert space and $\mathcal{H}^*$ be its dual space. Given a compact parameter set $\mathcal{P}\subset\mathbb{R}^d$ and a collection of functions $\{\theta_j: \mathcal{P}\rightarrow\mathbb{R}\}_{1\leq j\leq J}$ , we consider the parametrized bilinear form 
\begin{equation}\label{affineamu}
    a_{\mu}=\theta_1(\mu)a_1+\cdots+\theta_J(\mu)a_J,
\end{equation} 
where each $a_j: \mathcal{H}\times\mathcal{H}\rightarrow\mathbb{R}$ is symmetric and bilinear. In addition, we assume 
\begin{equation}\label{boundcoercive}
\begin{aligned}
    \sup_{0\neq v\in\mathcal{H}}\sup_{0\neq w\in\mathcal{H}}\frac{a_\mu(v,w)}{\|v\|_{\mathcal{H}}\|w\|_{\mathcal{H}}}\leq c_1,
        \quad\inf_{0\neq v\in\mathcal{H}}\frac{a_\mu(v,v)}{\|v\|_\mathcal{H}^2}\geq c_2
\end{aligned}
\end{equation}
for some constants $c_1>0$, $c_2>0$. Let $f\in\mathcal{H}^*$ be a bounded linear functional. 
By the Lax--Milgram theorem, there exists a unique $u_\mu\in\mathcal{H}$ such that
\begin{equation}\label{ellipticmodel}
    a_\mu(u_\mu,v)=f(v),\quad v\in\mathcal{H}.
\end{equation}
The parametric variational problem \eqref{ellipticmodel} is a model problem of RBMs, see, e.g., \cite{BinevCohenDahmenDeVore2011,Quarteroni2016}. In practice, $\mathcal{H}$ is a high-dimensional space, e.g., the \emph{finite element space} on a high-resolution mesh. The goal of RBMs is to efficiently find an approximate solution $\hat{u}_\mu$ for each parameter of interest $\mu$ without computing the high-fidelity solution $u_\mu$. 

When $\dim(\mathcal{H})=n<\infty$ is finite, let $\{\phi_i\}_{1\leq i\leq n}$ be a basis of $\mathcal{H}$. For simplicity of presentation, we rewrite \eqref{ellipticmodel} as the matrix-vector equation
\begin{equation}\label{Amuf}
    \mathbb{A}(\mu)\mathbf{u}_\mu:=\big(\theta_1(\mu)\mathbb{A}_1+\cdots+\theta_J(\mu)\mathbb{A}_J\big)\mathbf{u}_\mu=\mathbf{f},
\end{equation}
where 
$\mathbb{A}_j=(a_j(\phi_k,\phi_i))_{1\leq i,k\leq n}$, $\mathbf{f}=(f(\phi_1),\ldots,f(\phi_n))^t$. Here $\bullet^t$ denotes the transpose of a matrix. Then $(\phi_1,\ldots,\phi_n)\mathbf{u}_\mu$ gives rise to the solution $u_\mu$ of \eqref{ellipticmodel}.

Since $a_\mu$ and $\mathbb{A}(\mu)$ are symmetric and positive-definite (SPD), we shall use the Preconditioned Conjugate Gradient (PCG) method $\mathbb{B}\mathbb{A}(\mu_1)\mathbf{u}_{\mu_1}=\mathbb{B}\mathbf{f}$ at some parameter instance $\mu_1$ to generate a Krylov-type reduced basis subspace. The corresponding RBM is named as the Reduced Conjugate Gradient Basis Method (RCGBM). Let $(\bullet,\bullet)$ and $\|\bullet\|$ denote the Euclidean inner product and  norm, respectively. 
Our RCGBM for approximating the high-fidelity solution $u_\mu$ or $\mathbf{u}_\mu$ is described in Algorithm \ref{a:RCGBM}.
\begin{algorithm}[H]
  \caption{${\rm RCGBM}(\mathbb{B}, \mathcal{P}, \mu_0, \mu_1,  m)$ for solving $\mathbb{A}(\mu)\mathbf{u}_\mu=\mathbf{f}$}\label{a:RCGBM}
  \begin{algorithmic}
    \State \textbf{Input:}   a parameter set $\mathcal{P}$ of practical interest, $\mu_0\neq\mu_1\in\mathcal{P}$, an integer $m>0$, a preconditioner $\mathbb{B}: \mathbb{R}^n\rightarrow\mathbb{R}^n$ for $\mathbb{A}(\mu_0)$.
    \State\textbf{Offline Stage:} set 
    $\mathbf{u}_0=\mathbf{0}$,
    $\mathbf{r}_0=\mathbf{f}$,  $\mathbf{p}_0=\mathbf{z}_0=\mathbb{B}\mathbf{r}_0;$ 
   \For{$k=1:m-1$}
   \State $\alpha_k=(\mathbf{r}_{k-1},\mathbf{z}_{k-1})/(\mathbb{A}(\mu_1)\mathbf{p}_{k-1},\mathbf{p}_{k-1})$;
   \State $\mathbf{u}_k =  \mathbf{u}_{k-1} + \alpha_k\mathbf{p}_{k-1}$; 
   \State $\mathbf{r}_k =  \mathbf{r}_{k-1} - \alpha_k\mathbb{A}(\mu_1)\mathbf{p}_{k-1}$; 
   \State $\mathbf{z}_{k} = \mathbb{B} \mathbf{r}_{k}$; 
   \State $\beta_k = (\mathbf{r}_{k},z_{k})/( \mathbf{r}_{k-1},\mathbf{z}_{k-1})$;
   \State $\mathbf{p}_k = \mathbf{z}_{k} + \beta_k\mathbf{p}_{k-1}$; 
   \EndFor
   \State construct the reduced basis subspace 
   \[\mathcal{K}_m:= {\rm Span}\big\{\mathbf{p}_0,\mathbf{p}_1,\dots,\mathbf{p}_{m-1}\big\}\]
   and the projection matrix 
   \begin{equation*}\mathbb{P}:= \left[\frac{\mathbf{p}_0}{\|\mathbf{p}_0\|},\frac{\mathbf{p}_1}{\|\mathbf{p}_1\|},\dots,\frac{\mathbf{p}_{m-1}}{\|\mathbf{p}_{m-1}\|}\right].\end{equation*}
   \State \textbf{Online Stage:} for each $\mu\in\mathcal{P}$, compute the reduced basis solution $\hat{\mathbf{u}}_\mu\in\mathcal{K}_m$ such that 
   \begin{equation*}
       (\mathbb{A}(\mu)\hat{\mathbf{u}}_\mu,\mathbf{v})=(\mathbf{f},\mathbf{v}),\quad \mathbf{v}\in\mathcal{K}_m.
   \end{equation*} 
Equivalently, $\hat{\mathbf{u}}_\mu=\mathbb{P}(\mathbb{P}^t\mathbb{A}(\mu)\mathbb{P})^{-1}\mathbb{P}^t\mathbf{f}$.   
  \end{algorithmic}  
\end{algorithm} 

Let $p_i=(\phi_1,\ldots,\phi_n)\mathbf{p}_i$ for $i=0, 1, \ldots, m-1$ and 
\[\mathcal{H}_m={\rm Span}\big\{p_0, p_1, \ldots, p_{m-1}\big\}.
\]
In Algorithm \ref{a:RCGBM}, $\hat{\mathbf{u}}_\mu$ is the coordinate vector of $\hat{u}_\mu\in\mathcal{H}_m$ determined by
\[
a_\mu(\hat{u}_\mu,v)=f(v),\quad v\in\mathcal{H}_m.
\]
The vector $\hat{\mathbf{u}}_\mu$ is computed by the formula
\begin{equation}\label{uhatmu}
\hat{\mathbf{u}}_\mu=\mathbb{P}(\mathbb{P}^t\mathbb{A}(\mu)\mathbb{P})^{-1}\mathbb{P}^t\mathbf{f}=\mathbb{P}\left(\sum_{j=1}^J\theta_j(\mu)\mathbb{P}^t\mathbb{A}_j\mathbb{P}\right)^{-1}\mathbb{P}^t\mathbf{f}.
\end{equation}
To ensure the efficiency of the RCGBM, the quantities  $\mathbb{P}^t\mathbb{A}_j\mathbb{P}$, $\mathbb{P}^t\mathbf{f}$ should be computed and stored prior to any online evaluation of $\hat{u}_\mu$ or $\hat{\mathbf{u}}_\mu$. In this way, the inversion process in \eqref{uhatmu} is equivalently to solving a dense but small-scale $m\times m$ 
 linear system. Therefore, for each parameter $\mu$, the cost of online stage in the RCGBM is expected to be smaller than directly assembling $\mathbb{A}(\mu)$, $\mathbf{f}$ and then solving $\mathbb{A}(\mu)\mathbf{u}_\mu=\mathbf{f}$ by classical nearly optimal solvers such as multigrid methods and fast Fourier transforms.

\begin{remark}
The RCGBM could be applied to 
the parametric problem
\begin{equation}\label{Amufmu}
    a_\mu(u_\mu,v)=f_\mu(v),\quad v\in\mathcal{H},
\end{equation}
with a parameter-dependent source $f_\mu\in\mathcal{H}^*$. For example, assume that $f_\mu$ admits the affine decomposition 
\begin{equation}\label{affinefmu}
f_\mu=\sum_{k=1}^K\omega_k(\mu)f_k.
\end{equation}
For each input parameter $\mu\in\mathcal{P}$, we use the RCGBM algorithm $K$ times to find $\hat{u}^k_\mu\in\mathcal{K}_m$ with $k=1, \ldots, K$ satisfying
\begin{equation*}
    a_\mu(\hat{u}^k_\mu,v)=f_k(v),\quad v\in\mathcal{K}_m.
\end{equation*}
Then the linear combination $$\hat{u}_\mu:=\sum_{k=1}^K\omega_k(\mu)\hat{u}_\mu^k$$ is an efficient and accurate reduced basis solution in \eqref{Amufmu}.
\end{remark}

When $a_\mu$ and $f_\mu$ are not of the forms \eqref{affineamu} and \eqref{affinefmu}, one could first use the Empirical Interpolation Method (cf.~\cite{BarraultMadayNguyen2004,MadayMulaTurinici2016,Li2024CGA}) to construct separable approximations $\bar{a}_\mu=\sum_{j=1}^J\bar{\theta}_j(\mu)\bar{a}_j\approx a_\mu$ and $\bar{f}_\mu=\sum_{k=1}^K\bar{\omega}_k(\mu)\bar{f}_k\approx f_\mu$. It is then natural to use the RCGBM to approximate the  perturbed  solution $\bar{u}_\mu\in\mathcal{H}$ given by 
\begin{equation*}
\bar{a}_\mu(\bar{u}_\mu,v)=\bar{f}_\mu(v),\quad v\in\mathcal{H}.    
\end{equation*}
In this way, one would achieve the same online efficiency  as the separable model \eqref{Amufmu}.

\subsection{Positive-Definite PDEs with Two Parameters}\label{subsectwopara}
First we consider the elliptic problem \eqref{ellipticmodel} with two parameters: find $u_{\mu}\in\mathcal{H}$ such that
\begin{equation}\label{twoparamodel}
a_\mu(u_\mu,v)=\theta_1(\mu)a_1(u_\mu,v)+\theta_2(\mu)a_2(u_\mu,v)=f(v),\quad v\in\mathcal{H}.
\end{equation}
The equivalent matrix-vector problem reads
\begin{equation}\label{twoparamodelmatrix}
\mathbb{A}(\mu)\mathbf{u}_\mu=\big(\theta_1(\mu)\mathbb{A}_1+\theta_2(\mu)\mathbb{A}_2\big)\mathbf{u}_\mu=\mathbf{f},
\end{equation}
where $\mathbb{A}_1, \mathbb{A}_2$ and $\mathbf{u}_\mu$  have the same meanings as in \eqref{Amuf}.
In the rest of this subsection, we present several important examples of parametric PDEs 
 that could be cast into \eqref{twoparamodel} and \eqref{twoparamodelmatrix}.

\subsubsection{Second Order Elliptic Equation}\label{subsecelliptic}
Let the domain $\Omega\subset\mathbb{R}^d$ be the union of non-overlapping open subdomains $\Omega_1$ and $\Omega_2$, i.e., $\overline{\Omega}=\overline{\Omega}_1\cup\overline{\Omega}_2$, $\Omega_1\cap\Omega_2=\emptyset$. Let $\mathcal{P}$ be a compact set in $\mathbb{R}_+\times\mathbb{R}_+$ and $\alpha_\mu=\nu_1\mathbbm{1}_{\Omega_1}+\nu_2\mathbbm{1}_{\Omega_2}$ for $\mu=(\nu_1,\nu_2)\in\mathcal{P}$. 
Let $\mathcal{T}_h$ be a conforming triangulation of $\Omega$ aligned with the interface $\overline{\Omega}_1\cap\overline{\Omega}_2$, and  $\mathcal{H}\subset H_0^1(\Omega)$ be a finite element space built upon $\mathcal{T}_h$. For $f\in L^2(\Omega)$, we use a Finite Element Method (FEM) to compute $u_\mu\in \mathcal{H}$ satisfying
\begin{equation}\label{twoparaelliptic}
     a_\mu(u_\mu,v)=\int_\Omega \alpha_\mu\nabla u_\mu\cdot\nabla vdx=\int_\Omega fvdx,\quad v\in\mathcal{H}.
\end{equation}
Clearly, \eqref{twoparaelliptic} is a discretization of the parametric diffusion equation $-\nabla\cdot(\alpha_\mu\nabla \tilde{u}_\mu)=f$
under the homogeneous Dirichlet boundary condition. In this case,
\begin{align*}
    a_j(v,w)&=\int_{\Omega_j} \nabla v\cdot\nabla wdx,\quad j=1, 2,\\
a_\mu&=\nu_1a_1+\nu_2a_2.
\end{align*}

\subsubsection{Linear Elasticity}\label{subseclinearelasticity}
Let the domain $\Omega$ be occupied by an elastic body. For $\bm{f}\in [L^2(\Omega)]^3$ we consider the the Navier--Lam\'e equation governing linear elasticity: 
\begin{align*}
    \frac{2\nu_1}{1+\nu_2}\nabla\cdot\varepsilon(\tilde{\bm{u}}_\mu)+\frac{\nu_1\nu_2}{(1+\nu_2)(1-2\nu_2)}\nabla(\nabla\cdot\tilde{\bm{u}}_\mu)=-\bm{f},
\end{align*}
where $\tilde{\bm{u}}_\mu\in[H_0^1(\Omega)]^3$ is the  displacement field, $\varepsilon(\bm{v})=(\nabla\bm{v}+\nabla\bm{v}^t)/2$, and $\mu=(\nu_1,\nu_2)$ with $\nu_1>0$, $\nu_2\in(0,\frac{1}{2})$ being Lam\'e constants.  
We define the parameter coefficients
\begin{align*}
    \theta_1(\mu):=\frac{\nu_1}{1+\nu_2},\quad\theta_2(\mu):=\frac{\nu_1\nu_2}{(1+\nu_2)(1-2\nu_2)}
\end{align*}
and $\mathcal{H}\subset[H_0^1(\Omega)]^3$ to be a conforming finite element space. The FEM for linear elasticity seeks $\bm{u}_\mu\in\mathcal{H}$ satisfying
\begin{equation}\label{linearElasticity}
a_\mu(\bm{u}_\mu,\bm{v})=\theta_1(\mu)a_1(\bm{u}_\mu,\bm{v})+\theta_2(\mu)a_2(\bm{u}_\mu,\bm{v})=\int_\Omega\bm{f}\cdot\bm{v}dx,\quad\bm{v}\in\mathcal{H},
\end{equation}  
where the bilinear forms $a_1, a_2$ are given by 
\begin{align*}
    a_1(\bm{v},\bm{w})&=\int_\Omega\varepsilon(\bm{v}):\varepsilon(\bm{w})dx,\\
    a_2(\bm{v},\bm{w})&=\int_\Omega(\nabla\cdot\bm{v})(\nabla\cdot\bm{w})dx.
\end{align*}

\subsubsection{Eddy Current Model}\label{subsecMaxwell} 
In the third example, we consider the eddy current model of electromagnetism 
\begin{align*}
    \nabla\times(\nu_1^{-1}\nabla\times\tilde{\bm{u}}_\mu)+\nu_2\tilde{\bm{u}}_\mu&=\bm{f}\quad\text{ in }\Omega,\\
\bm{n}\times(\tilde{\bm{u}}_\mu\times\bm{n})&=\bm{0}\quad\text{ on }\partial\Omega,
\end{align*}
where $\tilde{\bm{u}}_\mu$ is the electric field, $\bm{f}\in [L^2(\Omega)]^3$, $\bm{n}$ is the outward unit normal to $\partial\Omega$, and $\nu_1>0, \nu_2>0$ are material constants. We then define the Sobolev space
\begin{align*}
   H({\rm curl},\Omega)=\big\{\bm{v}\in L^2(\Omega): \nabla\times\bm{v}\in[L^2(\Omega)]^3\big\}  
\end{align*} 
and set $\mathcal{H}\subset H({\rm curl},\Omega)$ to be a N\'ed\'elec edge element space. Given $\mu=(\nu_1,\nu_2)$ with $\nu_1, \nu_2>0$, the FEM for the eddy current problem seeks $\bm{u}_\mu\in\mathcal{H}$ such that
\begin{equation*}
a_\mu(\bm{u}_\mu,\bm{v})=\theta_1(\mu)a_1(\bm{u}_\mu,\bm{v})+\theta_2(\mu)a_2(\bm{u}_\mu,\bm{v})=\int_\Omega\bm{f}\cdot\bm{v}dx,\quad\bm{v}\in\mathcal{H},
\end{equation*}
which is of the form \eqref{twoparamodel}, where $\theta_1(\mu)=\nu_1^{-1}$, $\theta_2(\mu)=\nu_2$, and
\begin{align*}
a_1(\bm{v},\bm{w})&=\int_\Omega(\nabla\times\bm{v})\cdot(\nabla\times\bm{w})dx,\\
    a_2(\bm{v},\bm{w})&=\int_\Omega\bm{v}\cdot\bm{w}dx.
\end{align*}

\subsection{Convergence of the RCGBM} In this subsection, we present a convergence analysis of the RCGBM for \eqref{twoparamodel} and \eqref{twoparamodelmatrix}. The next lemma is the key observation that is useful throughout the rest of this paper.
\begin{lemma}\label{invariancelemma}
Let $\mathbb{A}(\mu)=\theta_1(\mu)\mathbb{A}_1+\theta_2(\mu)\mathbb{A}_2$,  $\mathbb{B}=\mathbb{A}(\mu_0)^{-1}$ and
\begin{align*}
    &\mathcal{K}_m(\mathbb{B}\mathbb{A}(\mu),\mathbf{v}):={\rm Span}\big\{\mathbf{v}, \mathbb{B}\mathbb{A}(\mu)\mathbf{v}, \ldots,(\mathbb{B}\mathbb{A}(\mu))^{m-1}\mathbf{v}\big\}.
\end{align*}
Assume that $(\theta(\mu_i),\theta(\mu_i))$ and $(\theta(\mu_0),\theta(\mu_0))$ are linearly independent with $i=1, 2$. Then for any $m>0$ and  $\mathbf{v}\in\mathbb{R}^n$, we have
\begin{align*}
\mathcal{K}_m(\mathbb{B}\mathbb{A}(\mu_1),\mathbf{v})&=\mathcal{K}_m(\mathbb{B}\mathbb{A}(\mu_2),\mathbf{v}),\\
\mathcal{K}_m(\mathbb{B}^t\mathbb{A}(\mu_1)^t,\mathbf{v})&=\mathcal{K}_m(\mathbb{B}^t\mathbb{A}(\mu_2)^t,\mathbf{v}).
  \end{align*}
\end{lemma}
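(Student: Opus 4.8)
The plan is to reduce the claimed invariance to two elementary facts: that $\mu\mapsto\mathbb{B}\mathbb{A}(\mu)$ is a \emph{linear} map of the coefficient vector $\theta(\mu):=(\theta_1(\mu),\theta_2(\mu))^t$, and that a Krylov subspace is unchanged under an invertible affine shift of its generating matrix. First I would set $M_j:=\mathbb{B}\mathbb{A}_j$ for $j=1,2$, so that $\mathbb{B}\mathbb{A}(\mu)=\theta_1(\mu)M_1+\theta_2(\mu)M_2$ depends linearly on $\theta(\mu)$. The single most important observation is that the choice $\mathbb{B}=\mathbb{A}(\mu_0)^{-1}$ forces $\mathbb{B}\mathbb{A}(\mu_0)=\theta_1(\mu_0)M_1+\theta_2(\mu_0)M_2=\mathbb{I}$; that is, this linear map sends $\theta(\mu_0)$ to the identity.

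Next I would exploit the linear independence hypothesis. Since $\theta(\mu_0)$ and $\theta(\mu_1)$ are linearly independent, they form a basis of $\mathbb{R}^2$, so there are scalars $a,b$ with $\theta(\mu_2)=a\,\theta(\mu_0)+b\,\theta(\mu_1)$. Applying the linear map above and using $\mathbb{B}\mathbb{A}(\mu_0)=\mathbb{I}$ gives $\mathbb{B}\mathbb{A}(\mu_2)=a\,\mathbb{I}+b\,\mathbb{B}\mathbb{A}(\mu_1)$. The coefficient $b$ must be nonzero: if $b=0$ then $\theta(\mu_2)=a\,\theta(\mu_0)$, contradicting the assumed independence of $\theta(\mu_2)$ and $\theta(\mu_0)$. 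Thus $\mathbb{B}\mathbb{A}(\mu_2)$ is an invertible affine function of $\mathbb{B}\mathbb{A}(\mu_1)$.

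It then remains to record the shift-and-scale invariance of Krylov subspaces: for any matrix $\mathbb{N}$, any vector $\mathbf{v}$, and scalars $a,b$ with $b\neq0$, one has $\mathcal{K}_m(b\mathbb{N}+a\mathbb{I},\mathbf{v})=\mathcal{K}_m(\mathbb{N},\mathbf{v})$. Indeed, expanding each power $(b\mathbb{N}+a\mathbb{I})^k$ as a polynomial in $\mathbb{N}$ of exact degree $k$ (its leading coefficient $b^k\neq0$), the change of basis carrying $\{\mathbf{v},\mathbb{N}\mathbf{v},\dots,\mathbb{N}^{m-1}\mathbf{v}\}$ to $\{\mathbf{v},(b\mathbb{N}+a\mathbb{I})\mathbf{v},\dots,(b\mathbb{N}+a\mathbb{I})^{m-1}\mathbf{v}\}$ is triangular with nonzero diagonal entries $b^0,b^1,\dots,b^{m-1}$, hence invertible, so the two sets span the same subspace. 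Combining this with the previous step yields $\mathcal{K}_m(\mathbb{B}\mathbb{A}(\mu_2),\mathbf{v})=\mathcal{K}_m(\mathbb{B}\mathbb{A}(\mu_1),\mathbf{v})$, which is the first identity.

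For the second identity I would repeat the argument verbatim after transposing: writing $\tilde M_j:=(\mathbb{A}(\mu_0)^t)^{-1}\mathbb{A}_j^t$, we again have $\mathbb{B}^t\mathbb{A}(\mu)^t=\theta_1(\mu)\tilde M_1+\theta_2(\mu)\tilde M_2$ linear in $\theta(\mu)$ with $\mathbb{B}^t\mathbb{A}(\mu_0)^t=\mathbb{I}$, and the \emph{same} scalars $a,b$ (they depend only on the $\theta$-vectors) give $\mathbb{B}^t\mathbb{A}(\mu_2)^t=a\mathbb{I}+b\,\mathbb{B}^t\mathbb{A}(\mu_1)^t$ with $b\neq0$. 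I expect no serious obstacle; the only point requiring care is the verification that $b\neq0$, which is precisely where the independence of $\theta(\mu_i)$ and $\theta(\mu_0)$ (rather than merely of $\theta(\mu_1)$ and $\theta(\mu_2)$) is used, together with the pleasant observation that this single affine reduction makes any linear dependence among $\mathbb{A}_1,\mathbb{A}_2$ entirely irrelevant.
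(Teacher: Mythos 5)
Your proof is correct, and it rests on the same two pillars as the paper's: the observation that $\mathbb{B}=\mathbb{A}(\mu_0)^{-1}$ forces $\theta_1(\mu_0)\mathbb{B}\mathbb{A}_1+\theta_2(\mu_0)\mathbb{B}\mathbb{A}_2=\mathbb{I}$, and the invariance of Krylov subspaces under invertible affine maps $\mathbb{N}\mapsto a\mathbb{I}+b\mathbb{N}$, $b\neq 0$. The organization differs in a small but pleasant way. The paper assumes WLOG $\theta_1(\mu_0)\neq 0$, solves for $\mathbb{B}\mathbb{A}_1$, and writes \emph{each} of $\mathbb{B}\mathbb{A}(\mu_1)$, $\mathbb{B}\mathbb{A}(\mu_2)$ as $\alpha_i\mathbb{I}+\beta_i\,\mathbb{B}\mathbb{A}_2$ with $\beta_i\neq 0$, so that both Krylov spaces are identified with the common reference space $\mathcal{K}_m(\mathbb{B}\mathbb{A}_2,\mathbf{v})$. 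You instead expand $\theta(\mu_2)$ in the basis $\{\theta(\mu_0),\theta(\mu_1)\}$ of $\mathbb{R}^2$ and obtain the direct relation $\mathbb{B}\mathbb{A}(\mu_2)=a\mathbb{I}+b\,\mathbb{B}\mathbb{A}(\mu_1)$ with $b\neq 0$, which eliminates the case split on which component of $\theta(\mu_0)$ is nonzero and makes transparent exactly where each of the two independence hypotheses is used ($i=1$ to get the basis, $i=2$ to get $b\neq 0$). You also spell out, via the triangular change-of-coefficients argument, the affine-invariance of Krylov spans that the paper applies implicitly in its chain of span equalities; that is a worthwhile addition, since it is the only place where any actual verification is needed. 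Both arguments transpose verbatim for the second identity.
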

\begin{proof}
Let $\mathbb{I}$ denote the identity matrix and $c_1=\theta_1(\mu_0)$, $c_2=\theta_2(\mu_0)$. Without loss of generality, we assume that $c_1\neq0.$ It is observed that $c_1\mathbb{B}\mathbb{A}_1+c_2\mathbb{B}\mathbb{A}_2=\mathbb{I}$ and 
\begin{equation}\label{BAmu}
\begin{aligned}
 \mathbb{B}\mathbb{A}(\mu_i)&=\theta_1(\mu_i)\mathbb{B}\mathbb{A}_1+\theta_2(\mu_i)\mathbb{B}\mathbb{A}_2\\
    &=\frac{\theta_1(\mu_i)}{c_1}\mathbb{I}+\Big(\theta_2(\mu_i)-\theta_1(\mu_i)\frac{c_2}{c_1}\Big)\mathbb{B}\mathbb{A}_2,
    \end{aligned}
\end{equation}
where $i=1, 2.$
Note that $\theta_2(\mu_i)-\theta_1(\mu_i)c_2/c_1\neq0$ because $(\theta_1(\mu_i),\theta_2(\mu_i))$ and $(c_1,c_2)$ are not collinear. As a consequence of \eqref{BAmu}, it holds that
\begin{align*}
    \mathcal{K}_m(\mathbb{B}\mathbb{A}(\mu_1),\mathbf{v})&={\rm Span}\big\{\mathbf{v},\mathbb{B}\mathbb{A}(\mu_1)\mathbf{v}, \ldots,(\mathbb{B}\mathbb{A}(\mu_1))^{m-1}\mathbf{v}\big\}\\
    &={\rm Span}\big\{\mathbf{v}, \mathbb{B}\mathbb{A}_2\mathbf{v}, \ldots,(\mathbb{B}\mathbb{A}_2)^{m-1}\mathbf{v}\big\}\\
    &={\rm Span}\big\{\mathbf{v}, \mathbb{B}\mathbb{A}(\mu_2)\mathbf{v}, \ldots,(\mathbb{B}\mathbb{A}(\mu_2))^{m-1}\mathbf{v}\big\}=\mathcal{K}_m(\mathbb{B}\mathbb{A}(\mu_2),\mathbf{v}).
\end{align*}
The identity $\mathcal{K}_m(\mathbb{B}^t\mathbb{A}(\mu_1)^t,\mathbf{v})=\mathcal{K}_m(\mathbb{B}^t\mathbb{A}(\mu_2)^t,\mathbf{v})$ can be proved in a similar way.
\end{proof}

It turns out that the accuracy of $\hat{u}_\mu$ or $\hat{\mathbf{u}}_\mu$ can be readily derived via the classical convergence estimate of the conjugate gradient method. 
\begin{theorem}\label{errorRCGBMtwo}
Let $\mathbb{A}(\mu)=\theta_1(\mu)\mathbb{A}_1+\theta_2(\mu)\mathbb{A}_2$, $\mathbb{B}=\mathbb{A}(\mu_0)^{-1}$, $\kappa(\mu)=\kappa(\mathbb{B}\mathbb{A}(\mu))$. Assume $\theta_1(\mu_0)\theta_2(\mu_1)-\theta_2(\mu_0)\theta_1(\mu_1)\neq0$. Then for any $\mu\in\mathcal{P}$ with $\theta_1(\mu_0)\theta_2(\mu)-\theta_2(\mu_0)\theta_1(\mu)\neq0$, the reduced basis solution $\hat{\mathbf{u}}_\mu$ of ${\rm RCGBM}(\mathbb{B}, \mathcal{P}, \mu_0, \mu_1, m)$ satisfies
\begin{equation*}
      \|\mathbf{u}_\mu - \hat{\mathbf{u}}_\mu\|_{\mathbb{A}(\mu)}\leq 2\left(\frac{\sqrt{\kappa(\mu)} - 1}{\sqrt{\kappa(\mu)} + 1}\right)^{m}\|\mathbf{u}_\mu\|_{\mathbb{A}(\mu)}.
  \end{equation*}
\end{theorem}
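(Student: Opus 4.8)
The plan is to show that the online reduced basis solution $\hat{\mathbf{u}}_\mu$ produced by ${\rm RCGBM}$ coincides \emph{exactly} with the $m$-th iterate of the PCG method applied to $\mathbb{B}\mathbb{A}(\mu)\mathbf{u}_\mu=\mathbb{B}\mathbf{f}$ with zero initial guess; the stated estimate then follows by quoting the classical energy-norm convergence bound for CG. The crux is that, although the reduced basis is assembled in the offline stage at the single parameter $\mu_1$, Lemma \ref{invariancelemma} lets us reinterpret it as the Krylov subspace that PCG would itself generate at the target parameter $\mu$.

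First I would identify the reduced basis subspace with a Krylov subspace at $\mu$. Since the offline stage is exactly PCG for $\mathbb{A}(\mu_1)$ with $\mathbf{u}_0=\mathbf{0}$, $\mathbf{r}_0=\mathbf{f}$, and $\mathbf{z}_0=\mathbb{B}\mathbf{f}$, the standard theory of PCG gives that the search directions span the preconditioned Krylov subspace, so that $\mathcal{K}_m={\rm Span}\{\mathbf{p}_0,\ldots,\mathbf{p}_{m-1}\}=\mathcal{K}_m(\mathbb{B}\mathbb{A}(\mu_1),\mathbb{B}\mathbf{f})$. Applying Lemma \ref{invariancelemma} with $\mathbf{v}=\mathbb{B}\mathbf{f}$, using the hypotheses that the coefficient pairs of $\mu_1$ and of $\mu$ are each linearly independent from that of $\mu_0$, yields $\mathcal{K}_m(\mathbb{B}\mathbb{A}(\mu_1),\mathbb{B}\mathbf{f})=\mathcal{K}_m(\mathbb{B}\mathbb{A}(\mu),\mathbb{B}\mathbf{f})$. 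Hence the offline-built subspace equals the order-$m$ Krylov subspace for the problem at $\mu$.

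Next I would match the variational conditions. The online stage defines $\hat{\mathbf{u}}_\mu\in\mathcal{K}_m$ by $(\mathbb{A}(\mu)\hat{\mathbf{u}}_\mu,\mathbf{v})=(\mathbf{f},\mathbf{v})$ for all $\mathbf{v}\in\mathcal{K}_m$, i.e.\ the error $\mathbf{u}_\mu-\hat{\mathbf{u}}_\mu$ is $\mathbb{A}(\mu)$-orthogonal to $\mathcal{K}_m$; this is precisely the Galerkin condition characterizing the $m$-th PCG iterate for $\mathbb{A}(\mu)\mathbf{u}_\mu=\mathbf{f}$ over the same subspace (with zero initial guess, hence initial residual $\mathbf{f}$). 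Because $\mathbb{A}(\mu)$ is SPD, the $\mathbb{A}(\mu)$-orthogonal projection onto $\mathcal{K}_m$ is unique, so $\hat{\mathbf{u}}_\mu$ is the PCG iterate and equivalently minimizes $\|\mathbf{u}_\mu-\mathbf{w}\|_{\mathbb{A}(\mu)}$ over $\mathbf{w}\in\mathcal{K}_m$. I would then invoke the textbook Chebyshev-polynomial (min--max) estimate over the spectrum of $\mathbb{B}\mathbb{A}(\mu)$ to obtain the claimed inequality, observing that $\|\mathbf{u}_\mu-\mathbf{u}_0\|_{\mathbb{A}(\mu)}=\|\mathbf{u}_\mu\|_{\mathbb{A}(\mu)}$ since $\mathbf{u}_0=\mathbf{0}$.

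The main obstacle, and the conceptual heart of the argument, is the subspace-invariance step: it is what makes the offline computation at $\mu_1$ valid at an arbitrary target $\mu$, and it relies entirely on the affine two-term structure of $\mathbb{A}(\mu)$ together with the choice $\mathbb{B}=\mathbb{A}(\mu_0)^{-1}$, as already established in Lemma \ref{invariancelemma}. Once that identification is secured, the remaining pieces are routine: the equivalence between the Galerkin projection and the optimal PCG iterate for SPD systems, and the classical CG convergence estimate.
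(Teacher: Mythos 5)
Your proposal is correct and follows essentially the same route as the paper's own proof: invoke Lemma \ref{invariancelemma} to identify the offline Krylov subspace at $\mu_1$ with $\mathcal{K}_m(\mathbb{B}\mathbb{A}(\mu),\mathbb{B}\mathbf{f})$, recognize $\hat{\mathbf{u}}_\mu$ as the $m$-th PCG iterate via the Galerkin/energy-minimization characterization, and conclude with the classical CG convergence bound using $\mathbf{u}_0=\mathbf{0}$. The only difference is cosmetic: you spell out the identification ${\rm Span}\{\mathbf{p}_0,\ldots,\mathbf{p}_{m-1}\}=\mathcal{K}_m(\mathbb{B}\mathbb{A}(\mu_1),\mathbb{B}\mathbf{f})$ and the uniqueness of the $\mathbb{A}(\mu)$-orthogonal projection, which the paper leaves implicit.
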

\begin{proof}
It follows from Lemma \ref{invariancelemma} that
\begin{equation*}
    \mathcal{K}_m(\mathbb{B}\mathbb{A}(\mu),\mathbb{B}\mathbf{f})=\mathcal{K}_m,
\end{equation*}
where $\mathcal{K}_m=\mathcal{K}_m(\mathbb{B}\mathbb{A}(\mu_1),\mathbb{B}\mathbf{f})$ is the Krylov subspace in ${\rm RCGBM}(\mathbb{B}, \mathcal{P}, \mu_0, \mu_1, m)$. 
As a result,
the definition of $\hat{\mathbf{u}}_\mu$ implies that the energy-minimization property 
\begin{align*}
    \|\mathbf{u}_\mu-\hat{\mathbf{u}}_\mu\|_{\mathbb{A}(\mu)}&=\min_{\mathbf{v}\in\mathcal{K}_m}\|\mathbf{u}_\mu-\mathbf{v}\|_{\mathbb{A}(\mu)}\\
    &=\min_{\mathbf{v}\in\mathcal{K}_m(\mathbb{B}\mathbb{A}(\mu),\mathbb{B}\mathbf{f})}\|\mathbf{u}_\mu-\mathbf{v}\|_{\mathbb{A}(\mu)}.
\end{align*}
By the well-known property of the PCG, we observe that $\hat{\mathbf{u}}_\mu$ is also the iterative solution of the PCG for $\mathbb{B}\mathbb{A}(\mu) \mathbf{u}_\mu=\mathbb{B}\mathbf{f}$ at the $m$-th step. The classical convergence estimate of the PCG then implies 
\begin{equation*}
    \|\mathbf{u}_\mu-\hat{\mathbf{u}}_\mu\|_{\mathbb{A}(\mu)}\leq2\left(\frac{\sqrt{\kappa(\mu)} - 1}{\sqrt{\kappa(\mu)} + 1}\right)^m\|\mathbf{u}_\mu-0\|_{\mathbb{A}(\mu)},
\end{equation*} 
which completes the proof.
\end{proof}

To proceed, we consider the energy norm $\|v\|_{A(\mu)}:=\sqrt{a_\mu(v,v)}$ on $\mathcal{H}$. It follows from the boundedness and coercive property in \eqref{boundcoercive} that
\begin{equation}\label{energy}
\begin{aligned}
    &\|v\|_{A(\mu)}^2=a_\mu(v,v)\geq c_2\|v\|_{\mathcal{H}}^2,\quad v\in\mathcal{H},\\
    &\|u_\mu\|_{A(\mu)}:=\sup_{v\in\mathcal{H}}\frac{a_\mu(u_\mu,v)}{\|v\|_{A(\mu)}}\leq\frac{1}{\sqrt{c_2}}\sup_{v\in\mathcal{H}} \frac{|f(v)|}{\|v\|_{\mathcal{H}}}=\frac{1}{\sqrt{c_2}}\|f\|_{\mathcal{H}^*}.
\end{aligned}
\end{equation}
Note that $\|v\|_{A(\mu)}^2=(\mathbb{A}(\mu)\mathbf{v},\mathbf{v})$, where $\mathbf{v}$ is the coordinate vector of $v\in\mathcal{H}$.
Then using \eqref{energy} and Theorem \ref{errorRCGBMtwo}, we obtain
\begin{equation*}
    \|u_\mu-\hat{u}_\mu\|_{\mathcal{H}}\leq\frac{2}{c_2}\left(\frac{\sqrt{\kappa(\mu)} - 1}{\sqrt{\kappa(\mu)} + 1}\right)^m\|f\|_{\mathcal{H}^*}.
\end{equation*} 
Due to the assumption \eqref{boundcoercive}, it is straightforward to see  $\kappa(\mathbb{A}(\mu_0)^{-1}\mathbb{A}(\mu))\leq c_1/c_2$ for all $\mu\in\mathcal{P}$. Therefore, we obtain the following uniform error estimate:
\begin{equation}\label{RBMerror}
\sigma_m:=\sup_{\mu\in\mathcal{P}}\|u_\mu - \hat{u}_\mu\|_{\mathcal{H}}\leq \frac{2}{c_2}\left(\frac{c_1/c_2 - 1}{c_1/c_2 + 1}\right)^m\|f\|_{\mathcal{H}^*}. 
\end{equation}
Here $\sigma_m$ measures how well the reduced basis subspace $\mathcal{K}_m$ approximates the high-fidelity solution manifold $\{u_\mu\in\mathcal{H}: \mu\in\mathcal{P}\}.$

\subsubsection{Kolmogorov $n$-width}
The convergence analysis of the RCGBM also leads to a transparent analysis of the Kolmogorov $n$-width of the solution manifold of parametric PDEs. For a Sobolev space  $\tilde{\mathcal{H}}$ and $f\in\tilde{\mathcal{H}}^*$, we consider the continuous variational problem: for each $\mu\in\mathcal{P}$ find $\tilde{u}_\mu\in\tilde{\mathcal{H}}$ such that
\begin{equation}\label{twoparacts}
    \theta_1(\mu)a_1(\tilde{u}_\mu,v)+\theta_2(\mu)a_2(\tilde{u}_\mu,v)=f(v),\quad v\in\tilde{\mathcal{H}}.
\end{equation}
In practice, $\tilde{\mathcal{H}}$ could be $H_0^1(\Omega)$, $[H_0^1(\Omega)]^3$ or $H({\rm curl},\Omega)$, see examples in Subsections \ref{subsecelliptic}, \ref{subseclinearelasticity}, and \ref{subsecMaxwell}.
The set of all solutions or the solution manifold of \eqref{twoparacts} is 
\begin{equation*}
    \mathcal{M}=\big\{\tilde{u}_\mu\in\tilde{\mathcal{H}}: \mu\in\mathcal{P}\big\}.
\end{equation*}
For the ease of analysis, we then rewrite \eqref{twoparacts} as the following operator equation
\begin{equation}
    A(\mu)\tilde{u}_\mu=(\theta_1(\mu)A_1+\theta_2(\mu)A_2)u_\mu=f,
\end{equation}
where $A_1: \mathcal{H}\rightarrow\mathcal{H}^*$ and $A_2: \mathcal{H}\rightarrow\mathcal{H}^*$ are determined by
\begin{equation*}
    (A_jv)(w)=a_i(v,w),\quad v,w\in\mathcal{H}.
\end{equation*}
Classical error analysis of RBMs often relates the convergence rate of greedy-type RBMs to the Kolmogorov $m$-width of $\mathcal{M}$ (cf.~\cite{BinevCohenDahmenDeVore2011,DeVorePetrova2013,Wojtaszczyk2015,LiSiegel2024}):
\begin{equation*}
    d_m(\mathcal{M})_{\tilde{\mathcal{H}}}:=\inf_{V_m}\sup_{\tilde{u}_\mu\in\mathcal{M}}\inf_{v\in V_m}\|\tilde{u}_\mu-v\|_{\tilde{\mathcal{H}}},
\end{equation*}
where the first infimum is taken over all $m$-dimensional subspaces of $\tilde{\mathcal{H}}$. When $\mathcal{M}$ is compact, we have $d_m(\mathcal{M})_{\tilde{\mathcal{H}}}\to0$ as $m\to\infty$.  In approximation theory, $d_m(\mathcal{M})_{\tilde{\mathcal{H}}}$ is an important concept that
measures the best possible rate of approximation of $\mathcal{M}$ by $n$-dimensional subspaces.
\begin{corollary}
Let $\mathcal{M}$ be the solution manifold of \eqref{twoparacts}. Assume that 
    \begin{align*}
        a_\mu(v,w)&\leq c_1\|v\|_{\tilde{\mathcal{H}}}\|w\|_{\tilde{\mathcal{H}}},\\
        a_\mu(v,v)&\geq c_2\|v\|_{\tilde{\mathcal{H}}}^2
    \end{align*}
for all $v, w\in \tilde{\mathcal{H}}$. Then it holds that
\begin{equation*}
    d_m(\mathcal{M})_{\tilde{\mathcal{H}}}\leq \frac{2}{c_2}\left(\frac{c_1/c_2-1}{c_1/c_2+1}\right)^m\|f\|_{\tilde{\mathcal{H}}^*}.
\end{equation*}
\end{corollary}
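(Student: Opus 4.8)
The plan is to bound $d_m(\mathcal{M})_{\tilde{\mathcal{H}}}$ from above, and since the Kolmogorov width is an infimum over all $m$-dimensional subspaces, it suffices to exhibit one subspace that realizes the stated rate. The natural candidate is the reduced Krylov basis subspace built exactly as in the RCGBM, but now at the level of the continuous operators. Concretely, I would fix $\mu_0\neq\mu_1$ with $(\theta_1(\mu_0),\theta_2(\mu_0))$ and $(\theta_1(\mu_1),\theta_2(\mu_1))$ linearly independent, set $B=A(\mu_0)^{-1}:\tilde{\mathcal{H}}^*\to\tilde{\mathcal{H}}$, and define
\[
V_m=\mathcal{K}_m(BA(\mu_1),Bf)={\rm Span}\big\{Bf,BA(\mu_1)Bf,\ldots,(BA(\mu_1))^{m-1}Bf\big\}.
\]
This is a fixed subspace of dimension at most $m$, which is all that is needed for an upper bound on $d_m$.

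The core of the argument reuses the reasoning behind \eqref{RBMerror} almost verbatim, only replacing the finite element space $\mathcal{H}$ by $\tilde{\mathcal{H}}$. First I would invoke Lemma \ref{invariancelemma} at the operator level --- its proof is purely algebraic in $BA_1$, $BA_2$ and therefore holds for operators on any Hilbert space --- to conclude $\mathcal{K}_m(BA(\mu),Bf)=V_m$ for every $\mu$ with $\theta_1(\mu_0)\theta_2(\mu)-\theta_2(\mu_0)\theta_1(\mu)\neq0$. For such $\mu$, the Galerkin projection $\hat{u}_\mu\in V_m$ defined by $a_\mu(\hat{u}_\mu,v)=f(v)$ for $v\in V_m$ is the $A(\mu)$-orthogonal projection of $\tilde{u}_\mu$ onto $V_m$, hence the $m$-th preconditioned conjugate gradient iterate for $BA(\mu)\tilde{u}_\mu=Bf$ started from $0$. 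The classical Chebyshev estimate then gives
\[
\|\tilde{u}_\mu-\hat{u}_\mu\|_{A(\mu)}\leq 2\left(\frac{\sqrt{\kappa(\mu)}-1}{\sqrt{\kappa(\mu)}+1}\right)^m\|\tilde{u}_\mu\|_{A(\mu)},
\]
exactly as in Theorem \ref{errorRCGBMtwo}.

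I would then pass from the energy norm to the $\tilde{\mathcal{H}}$-norm using the continuous coercivity and boundedness hypotheses of the corollary, which yield $\|w\|_{\tilde{\mathcal{H}}}\leq c_2^{-1/2}\|w\|_{A(\mu)}$, the a priori bound $\|\tilde{u}_\mu\|_{A(\mu)}\leq c_2^{-1/2}\|f\|_{\tilde{\mathcal{H}}^*}$ as in \eqref{energy}, and the uniform condition number estimate $\kappa(\mu)\leq c_1/c_2$. Combining these and using the monotonicity of $t\mapsto(\sqrt{t}-1)/(\sqrt{t}+1)$ gives
\[
\inf_{v\in V_m}\|\tilde{u}_\mu-v\|_{\tilde{\mathcal{H}}}\leq\|\tilde{u}_\mu-\hat{u}_\mu\|_{\tilde{\mathcal{H}}}\leq\frac{2}{c_2}\left(\frac{c_1/c_2-1}{c_1/c_2+1}\right)^m\|f\|_{\tilde{\mathcal{H}}^*}.
\]
The degenerate parameters, for which $(\theta_1(\mu),\theta_2(\mu))$ is collinear with $(\theta_1(\mu_0),\theta_2(\mu_0))$, are handled for free: there $A(\mu)$ is a scalar multiple of $A(\mu_0)$, so $\tilde{u}_\mu$ is a multiple of $Bf\in V_m$ and the approximation error vanishes. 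Taking the supremum over $\mu\in\mathcal{P}$ and then the infimum over subspaces (noting the best approximation error only decreases when $V_m$ is enlarged to an $m$-dimensional superspace) yields the claimed bound.

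The hard part will be justifying the conjugate gradient convergence estimate in the infinite-dimensional setting rather than the algebraic identity itself. Unlike the finite-dimensional case, $BA(\mu)$ now may have continuous spectrum; however, coercivity and boundedness confine this spectrum to a compact interval $[\lambda_{\min},\lambda_{\max}]\subset(0,\infty)$ with $\lambda_{\max}/\lambda_{\min}=\kappa(\mu)\leq c_1/c_2$, and the Chebyshev polynomial minimization underlying the CG bound depends only on this enclosing interval, not on the discreteness of the eigenvalues. One should also verify that $B=A(\mu_0)^{-1}$ is well defined and that $BA(\mu)$ is self-adjoint and positive in the $A(\mu_0)$-inner product, both of which follow directly from the Lax--Milgram theorem and the symmetry of $a_1,a_2$.
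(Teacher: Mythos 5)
Your proposal is correct and follows essentially the same route as the paper's proof: exhibit the continuous-level Krylov subspace $\mathcal{K}_m(BA(\mu_1),Bf)$ as the candidate $m$-dimensional space, rerun the invariance lemma and PCG convergence analysis in $\tilde{\mathcal{H}}$ to get the bound of \eqref{RBMerror}, and conclude from the definition of $d_m(\mathcal{M})_{\tilde{\mathcal{H}}}$ as an infimum over subspaces. Your treatment is in fact more careful than the paper's terse ``redo the same analysis'' --- notably your handling of the degenerate collinear parameters (where $\tilde{u}_\mu$ is a multiple of $Bf$ and the error vanishes) and your justification of the Chebyshev bound for operators with possibly continuous spectrum --- but these are refinements of the same argument, not a different one.
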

\begin{proof}
Let $B=A(\mu_0)^{-1}$ and some fixed $\mu_1$ we set
\begin{equation*}
    \tilde{\mathcal{K}}_m:={\rm Span}\big\{Bf,BA(\mu_1)Bf, \ldots,(BA(\mu_1))^{m-1}Bf\big\}.
\end{equation*}
By redoing the same analysis of \eqref{RBMerror} in the Hilbert space $\tilde{\mathcal{H}}$, we have
\begin{equation*}
        \|\tilde{u}_\mu-P_{\tilde{\mathcal{K}}_m}\tilde{u}_\mu\|_{\tilde{\mathcal{H}}}\leq\frac{2}{c_2}\left(\frac{c_1/c_2-1}{c_1/c_2+1}\right)^m\|f\|_{\tilde{\mathcal{H}}^*}.
\end{equation*}
Combining it with $d_m(\mathcal{M})_{\tilde{\mathcal{H}}}\leq\|\tilde{u}_\mu-P_{\mathcal{K}_m}\tilde{u}_\mu\|_{\tilde{\mathcal{H}}}$ completes the proof.
\end{proof}

\section{Nonsymmetric and Indefinite Parametric PDEs}\label{secNSPD} In this section, we extend the RCGBM to nonsymmetric and indefinite parametric PDEs:
\begin{equation}\label{multiparabilinear}
a_\mu(u_\mu,v)=\theta_1(\mu)a_1(u_\mu,v)+\cdots+\theta_J(\mu)a_J(u_\mu,v)=f(v),\quad v\in\mathcal{H},
\end{equation}
where $a_\mu$ is not necessarily an SPD bilinear form. Following the same convention as in \eqref{Amuf}, we have  the equivalent matrix-vector equation
\begin{equation}\label{multiparamatrix}
\mathbb{A}(\mu)\mathbf{u}_\mu=\big(\theta_1(\mu)\mathbb{A}_1+\cdots+\theta_J(\mu)\mathbb{A}_J\big)\mathbf{u}_\mu=\mathbf{f}.
\end{equation}
Here $\mathbb{A}(\mu)$ is not necessarily an SPD matrix. To ensure the well-posedness of \eqref{multiparabilinear} and \eqref{multiparamatrix}, we assume that $a_\mu$ satisfies
\begin{subequations}
    \begin{align}
        &\sup_{0\neq v\in\mathcal{H}}\sup_{0\neq v\in\mathcal{H}}\frac{a_\mu(v,w)}{\|v\|_{\mathcal{H}}\|w\|_{\mathcal{H}}}\leq c_1,\\
        &\inf_{0\neq v\in\mathcal{H}}\sup_{0\neq v\in\mathcal{H}} \frac{a_\mu(v,w)}{\|v\|_{\mathcal{H}}\|w\|_{\mathcal{H}}}\geq c_2>0.
    \end{align}
\end{subequations}
In this case, the PCG and RCGBM are not applicable anymore. A natural remedy is to construct the reduced basis subspace by more general iterative solvers such as the Generalized Minimum Residual (GMRES), see, e.g., \cite{Saad2003}. The corresponding RBM is named as ``Reduced Krylov Basis Method of the first kind" (${\rm RKBM}_1$), see Algorithm \ref{a:RKBM1} for details.

\begin{algorithm}[thp]
  \caption{${\rm RKBM_1}(\mathbb{B}, \mathbb{M}, \mathcal{P}, \mu_0, \mu_1,  m)$ for solving $\mathbb{A}(\mu)\mathbf{u}_\mu=\mathbf{f}$}\label{a:RKBM1}
  \begin{algorithmic}
  \State \textbf{Input:}  a parameter set $\mathcal{P}$ of practical interest, $\mu_0\neq\mu_1\in\mathcal{P}$, 
 an integer $m>0$, a preconditioner $\mathbb{B}: \mathbb{R}^n\rightarrow\mathbb{R}^n$ for $\mathbb{A}(\mu_0)$, and an SPD matrix $\mathbb{M}$.
    \State\textbf{Offline Stage:} 
    use the GMRES for $\mathbb{B}\mathbb{A}(\mu_1)\mathbf{u}_{\mu_1}=\mathbb{B}\mathbf{f}$ with initial guess $\mathbf{u}_0=\mathbf{0}$ to generate iterative solutions $\mathbf{u}_1, \ldots, \mathbf{u}_{m-1}$ and $\mathbf{z}_j=\mathbb{B}(\mathbf{f}-\mathbb{A}(\mu_1)\mathbf{u}_j)$, $j=0, \ldots, m-1$; 
    \State construct the reduced basis subspace \[
   \mathcal{K}_m={\rm Span}\big\{\mathbf{z}_0, \mathbf{z}_1, \ldots, \mathbf{z}_{m-1}\big\}
   \]
   and the projection matrix 
   \begin{align*}
   \mathbb{P}=\left[\frac{\mathbf{z}_0}{\|\mathbf{z}_0\|},\frac{\mathbf{z}_1}{\|\mathbf{z}_1\|},\dots,\frac{\mathbf{z}_{m-1}}{\|\mathbf{z}_{m-1}\|}\right].
   \end{align*}

   \State \textbf{Online Evaluation:} for each $\mu\in\mathcal{P}$, compute the reduced basis solution $\hat{\mathbf{u}}_\mu\in\mathcal{K}_m$ such that 
\begin{equation*}
\|\mathbb{B}\mathbf{f}-\mathbb{B}\mathbb{A}(\mu)\hat{\mathbf{u}}_\mu\|_{\mathbb{M}}=\min_{\mathbf{v}\in\mathcal{K}_m}\|\mathbb{B}\mathbf{f}-\mathbb{B}\mathbb{A}(\mu)\mathbf{v}\|_{\mathbb{M}}.
\end{equation*} 
Equivalently, $\hat{\mathbf{u}}_\mu=\mathbb{P}((\mathbb{B}\mathbb{A}(\mu)\mathbb{P})^t\mathbb{M}\mathbb{B}\mathbb{A}(\mu)\mathbb{P})^{-1}(\mathbb{B}\mathbb{A}(\mu)\mathbb{P})^t\mathbb{M}\mathbb{B}\mathbf{f}$.
  \end{algorithmic}  
\end{algorithm}  

In the above algorithm, we determine the online solution $\hat{\mathbf{u}}_\mu$ by a small-scale least-squares problem with respect to the norm $(\bullet,\bullet)_{\mathbb{M}}$ in the Krylov subspace $\mathcal{K}_m$. When $(\bullet,\bullet)_{\mathbb{M}}=(\bullet,\bullet)$ is the $\ell_2$-norm, $\hat{\mathbf{u}}_\mu$ coincides with the iterative solution of a standard GMRES method for $\mathbb{B}\mathbb{A}(\mu)\mathbf{u}_\mu=\mathbb{B}\mathbf{f}$ at the $m$-th step.

To efficiently evaluate the online solution $\hat{\mathbf{u}}_\mu$ in Algorithm \ref{a:RKBM1}, it is important to pre-compute and store the products 
 $(\mathbb{B}\mathbb{A}_j\mathbb{P})^t\mathbb{M}\mathbb{B}\mathbf{f}$ and    $(\mathbb{B}\mathbb{A}_j\mathbb{P})^t\mathbb{M}(\mathbb{B}\mathbb{A}_k\mathbb{P})$ with $1\leq j, k\leq J$. Then for each paramter $\mu\in\mathcal{P}$, one could assemble
\begin{align*}
    &(\mathbb{B}\mathbb{A}(\mu)\mathbb{P})^t\mathbb{M}(\mathbb{B}\mathbb{A}(\mu)\mathbb{P})=\sum_{j, k=1}^J\theta_j(\mu)\theta_k(\mu)(\mathbb{B}\mathbb{A}_j\mathbb{P})^t\mathbb{M}(\mathbb{B}\mathbb{A}_k\mathbb{P})\\
    &(\mathbb{B}\mathbb{A}(\mu)\mathbb{P})^t\mathbb{M}\mathbb{B}\mathbf{f}=\sum_{j=1}^J\theta_j(\mu)(\mathbb{B}\mathbb{A}_j\mathbb{P})^t\mathbb{M}\mathbb{B}\mathbf{f}
\end{align*}
and complete the inversion process for $\hat{\mathbf{u}}_\mu$
without $O(n)$ computational cost. 

Another popular family of general-purpose iterative solvers is the Bi-Conjugate Gradient (BiCG) method and its variants (cf.~\cite{VanDerVorst1992,BankChan1993,Saad2003}). The corresponding RBM is termed ``Reduced Krylov Basis Method of the second kind" (${\rm RKBM}_2$), see Algorithm \ref{a:RKBM2} for details. In particular, the reduced basis subspace in ${\rm RKBM}_2$ is generated by BiCG iterations and the online output $\hat{\mathbf{u}}_\mu$ of ${\rm RKBM}_2$ is the same as  the $m$-th iterate of a BiCG method for $\mathbb{B}\mathbb{A}(\mu)\mathbf{u}_\mu=\mathbb{B}\mathbf{f}$.

\begin{algorithm}[thp]
  \caption{${\rm RKBM}_2(\mathbb{B}, \mathcal{P}, \mu_0, \mu_1,  m)$ for solving $\mathbb{A}(\mu)\mathbf{u}_\mu=\mathbf{f}$ in $\mathbb{R}^n$}\label{a:RKBM2}
  \begin{algorithmic}
    \State \textbf{Input:}  a parameter set $\mathcal{P}$ of practical interest, $\mu_0\neq\mu_1\in\mathcal{P}$, an integer $m>0$, a preconditioner $\mathbb{B}: \mathbb{R}^n\rightarrow\mathbb{R}^n$ for $\mathbb{A}(\mu_0)$ and its adjoint $\mathbb{B}^t$.
    \State\textbf{Offline Stage:} set 
    $\mathbf{u}_0=\mathbf{0}$,
    $\mathbf{r}_0=\mathbf{f}$, $\mathbf{p}_0=\mathbf{z}_0=\mathbb{B}\mathbf{r}_0$;

    \State 
    select some $\mathbf{r}_0^*$ with $(\mathbf{r}_0,\mathbf{r}_0^*)\neq0$ and set $\mathbf{p}^*_0=\mathbf{z}^*_0=\mathbb{B}^t(\mathbf{r}^*_0)$;
   \For{$k=1:(m-1)$}
   \State $\alpha_k=(\mathbf{r}_{k-1},\mathbf{z}^*_{k-1})/(\mathbb{A}(\mu_1)\mathbf{p}_{k-1},\mathbf{p}^*_{k-1})$;
   \State $\mathbf{u}_k =  \mathbf{u}_{k-1} + \alpha_k\mathbf{p}_{k-1}$; 
   \State $\mathbf{r}_k =  \mathbf{r}_{k-1} - \alpha_k\mathbb{A}(\mu_1)\mathbf{p}_{k-1}$;  
   \State $\mathbf{r}^*_k =  \mathbf{r}^*_{k-1} - \alpha_k\mathbb{A}({\mu_1})^t\mathbf{p}^*_{k-1}$;
   \State $\mathbf{z}_{k} = \mathbb{B}\mathbf{r}_{k}$;
   \State $\mathbf{z}^*_{k} = \mathbb{B}^t(\mathbf{r}^*_{k})$;
   \State $\beta_k = (\mathbf{r}_{k},\mathbf{z}_{k}^*)/(\mathbf{r}_{k-1},\mathbf{z}_{k-1}^*)$;
   \State $\mathbf{p}_{k} = \mathbf{z}_{k} + \beta_k \mathbf{p}_{k-1}$; 
   \State $\mathbf{p}^*_{k} = \mathbf{z}^*_{k} + \beta_k \mathbf{p}^*_{k-1}$;
   \EndFor
   \State construct the projection matrices \begin{align*}
       \mathbb{P}&:=\left[\frac{\mathbf{p}_0}{\|\mathbf{p}_0\|},\frac{\mathbf{p}_1}{\|\mathbf{p}_1\|},\dots,\frac{\mathbf{p}_{m-1}}{\|\mathbf{p}_{m-1}\|}\right],\\
       \mathbb{Q}&:=\left[\frac{\mathbf{p}^*_0}{\|\mathbf{p}^*_0\|},\frac{\mathbf{p}^*_1}{\|\mathbf{p}^*_1\|},\dots,\frac{\mathbf{p}^*_{m-1}}{\|\mathbf{p}^*_{m-1}\|}\right];
   \end{align*}
   set the reduced basis spaces as $\mathcal{K}_m={\rm col}(\mathbb{P})$, $\mathcal{L}_m={\rm col}(\mathbb{Q})$.
   \State \textbf{Online Evaluation:} for each $\mu\in\mathcal{P}$, find the reduced basis solution $\hat{\mathbf{u}}_\mu\in\mathcal{K}_m$ such that 
\begin{equation*}
(\mathbb{A}(\mu)\hat{\mathbf{u}}_\mu,\mathbf{v})=(\mathbf{f},\mathbf{v}),\quad \mathbf{v}\in\mathcal{L}_m.
\end{equation*} 
Equivalently, $\hat{\mathbf{u}}_\mu=\mathbb{P}(\mathbb{Q}^\top\mathbb{A}(\mu)\mathbb{P})^{-1}\mathbb{Q}^\top\mathbf{f}$.
  \end{algorithmic}  
\end{algorithm}

\subsection{Linear PDEs with Two Parameters}
Although the algorithms ${\rm RKBM}_1$ and ${\rm RKBM}_2$ are applicable to general parametric PDEs, we are not able to derive  a concise and transparent error analysis in the most general case. Therefore, we shall focus on the two-parameter variational problem:
\begin{equation}\label{twoparaNSPD}
a_\mu(u_\mu,v)=\theta_1(\mu)a_1(u_\mu,v)+\theta_2(\mu)a_2(u_\mu,v)=f(v),\quad v\in\mathcal{H},
\end{equation}
or the equivalent two-parameter matrix-vector problem:
\begin{equation}\label{twoparaNSPDmatrix}
\mathbb{A}(\mu)\mathbf{u}_\mu=(\theta_1(\mu)\mathbb{A}_1+\theta_2(\mu)\mathbb{A}_2)\mathbf{u}_\mu=\mathbf{f}.
\end{equation}
Several model problems of the form \eqref{twoparaNSPD} or \eqref{twoparaNSPDmatrix} are described below. 

\subsubsection{Convection-Diffusion Equation}\label{subsecConvectionDiffusion}
Let $\bm{b}\in C^1(\overline{\Omega})$ be a divergence-free vector field and $f\in L^2(\Omega)$. Let $\mu=(\nu_1,\nu_2)$ and $\mathcal{P}\subset \mathbb{R}_+\times[0,2\pi]$ be a compact subset. For each $\mu\in\mathcal{P}$ we consider the convection-diffusion equation
\begin{equation*}
\begin{aligned}
-\nu_1\Delta\tilde{u}_\mu + \cos(\nu_2)\bm{b}\cdot\nabla \tilde{u}_\mu&=f\quad\text{ in }\Omega,\\
\tilde{u}_\mu&=0\quad\text{ on }\partial\Omega.
\end{aligned}
\end{equation*}
The corresponding FEM based on a finite element space $\mathcal{H}\subset H_0^1(\Omega)$ is of the form \eqref{twoparaNSPD},
where $\theta_1(\mu)=\nu_1$, $\theta_2(\mu)=\cos(\nu_2)$, and $a_1, a_2$ are given by 
\begin{align*}
a_1(v,w)&=\int_\Omega\nabla v\cdot\nabla wdx,\\
    a_2(v,w)&=\int_\Omega (\bm{b}\cdot\nabla v)wdx.
\end{align*}

\subsubsection{Stokes Equation}
The second example is the Stokes equation 
\begin{equation*}
\begin{aligned}
      -\mu\Delta\tilde{\bm{u}}_\mu+\nabla\tilde{p}_\mu&=\bm{f}\quad\text{ in }\Omega,\\
      \nabla\cdot\tilde{\bm{u}}_\mu&=0\quad\text{ in }\Omega,
\end{aligned}
\end{equation*}
subject to the no-slip boundary condition $\tilde{\bm{u}}_\mu|_{\partial\Omega}=0$. Let $L_0^2(\Omega)=\{q\in L^2(\Omega): \int_\Omega qdx=0\}$. Let $\mathcal{H}=V\times Q\subset[H_0^1(\Omega)]^3\times L_0^2(\Omega)$ be an inf-sup stable Stokes finite element space pair, e.g., the Taylor--Hood element (cf.~\cite{GiraultRaviart1986}). For each $\mu>0$ the FEM seeks $(\bm{u}_\mu,p_\mu)\in\mathcal{H}$ such that 
\begin{equation*}
    \theta_1(\mu)a_1(\bm{u}_\mu,p_\mu;\bm{v},q)+\theta_2(\mu)a_2(\bm{u}_\mu,p_\mu;\bm{v},q)=\int_\Omega \bm{f}\cdot\bm{v}dx,\quad (\bm{v},q)\in\mathcal{H},
\end{equation*}
where $\theta_1(\mu)=\mu$, $\theta_2(\mu)=1$, and 
\begin{align*}
    &a_1(\bm{v},q;\bm{w},r)=\int_\Omega\nabla \bm{v}\cdot\nabla \bm{w}dx,\\
    &a_2(\bm{v},q;\bm{w},r)=-\int_\Omega(\nabla\cdot \bm{v})r dx-\int_\Omega(\nabla\cdot\bm{w})q dx.
\end{align*}

\subsubsection{Helmholtz Equation}
The third example is the indefinite problem 
\begin{align*}
    -\Delta\tilde{u}_\mu-\mu^2\tilde{u}_\mu&=f\quad\text{ in }\Omega,\\
    \tilde{u}_\mu&=0\quad\text{ on }\partial\Omega,
\end{align*}
which is a Helmholtz-type equation. Let $\mathcal{H}\subset H_0^1(\Omega)$ be the Lagrange finite element space. The corresponding FEM seeks $u_\mu\in\mathcal{H}$ satisfying
\begin{equation*}
    \int_\Omega\nabla u_\mu\cdot\nabla vdx-\mu^2\int_\Omega u_\mu vdx=\int_\Omega fvdx,\quad v\in\mathcal{H}.
\end{equation*}
The above FEM is of the form \eqref{twoparaNSPD} with $\theta_1(\mu)=1$,  $\theta_2(\mu)=-\mu^2$,  and 
\begin{align*}
    a_1(v,w)&=\int_\Omega\nabla v\cdot\nabla wdx,\\
    a_2(v,w)&=\int_\Omega vwdx.
\end{align*}

\subsection{Convergence of GMRES-type Reduced Basis}

In this subsection, we present a transparent convergence analysis of ${\rm RKBM}_1$ using the Field-of-Value (FOV) analysis of the GMRES (cf.~\cite{Elman1982,Ma2016}).
\begin{theorem}\label{errorRKBM1}
Let $\mathbb{A}(\mu)=\theta_1(\mu)\mathbb{A}_1+\theta_2(\mu)\mathbb{A}_2$, $\mathbb{B}=\mathbb{A}(\mu_0)^{-1}$, $\kappa(\mu)=\kappa(\mathbb{B}\mathbb{A}(\mu))$, 
\begin{align*}
\gamma(\mu):=\inf_{\mathbf{v}\neq\mathbf{0}}\frac{(\mathbb{B}\mathbb{A}(\mu)\mathbf{v},\mathbf{v})_{\mathbb{M}}}{(\mathbf{v},\mathbf{v})_{\mathbb{M}}},\quad\Gamma(\mu):=\sup_{\mathbf{w}\neq\mathbf{0}}\frac{\|\mathbb{B}\mathbb{A}(\mu)\mathbf{w}\|_{\mathbb{M}}}{\|\mathbf{w}\|_{\mathbb{M}}}.
\end{align*}
Assume $\theta_1(\mu_0)\theta_2(\mu_1)-\theta_2(\mu_0)\theta_1(\mu_1)\neq0$. Then for any $\mu\in\mathcal{P}$ with $\theta_1(\mu_0)\theta_2(\mu)-\theta_2(\mu_0)\theta_1(\mu)\neq0$, the reduced basis solution $\hat{\mathbf{u}}_\mu$ of ${\rm RKBM}_1(\mathbb{B}, \mathbb{M}, \mathcal{P}, \mu_0, \mu_1, m)$ satisfies
\begin{equation*}
      \|\mathbb{B}\mathbb{A}(\mu)(\mathbf{u}_\mu-\hat{\mathbf{u}}_\mu)\|_{\mathbb{M}}\leq\left(1-\frac{\gamma(\mu)^2}{\Gamma(\mu)^2}\right)^{\frac{m}{2}}\|\mathbb{B}\mathbf{f}\|_{\mathbb{M}}.
  \end{equation*}
\end{theorem}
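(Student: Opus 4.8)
The plan is to mirror the proof of Theorem~\ref{errorRCGBMtwo}, replacing the energy-minimization property of the PCG with the residual-minimization (field-of-values) theory of GMRES carried out in the $\mathbb{M}$-inner product. The three ingredients are: (i) the reduced basis subspace is a $\mu$-independent Krylov subspace; (ii) the online solution coincides with an $\mathbb{M}$-weighted GMRES iterate for the parameter $\mu$; and (iii) the Elman bound controls the GMRES residual.

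First I would show that the reduced basis subspace is $\mu$-independent. Since the offline stage sets $\mathbf{u}_0=\mathbf{0}$, we have $\mathbf{z}_0=\mathbb{B}\mathbf{f}$, and each GMRES iterate satisfies $\mathbf{u}_j\in\mathcal{K}_j(\mathbb{B}\mathbb{A}(\mu_1),\mathbb{B}\mathbf{f})$, so $\mathbf{z}_j=\mathbb{B}\mathbf{f}-\mathbb{B}\mathbb{A}(\mu_1)\mathbf{u}_j\in\mathcal{K}_{j+1}(\mathbb{B}\mathbb{A}(\mu_1),\mathbb{B}\mathbf{f})$. Assuming no breakdown before step $m$, the minimal-residual property makes $\mathbf{z}_0,\dots,\mathbf{z}_{m-1}$ linearly independent, whence $\mathcal{K}_m=\mathcal{K}_m(\mathbb{B}\mathbb{A}(\mu_1),\mathbb{B}\mathbf{f})$. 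The hypotheses $\theta_1(\mu_0)\theta_2(\mu_1)-\theta_2(\mu_0)\theta_1(\mu_1)\neq0$ and $\theta_1(\mu_0)\theta_2(\mu)-\theta_2(\mu_0)\theta_1(\mu)\neq0$ are precisely the collinearity conditions of Lemma~\ref{invariancelemma}, so I may invoke it to conclude $\mathcal{K}_m=\mathcal{K}_m(\mathbb{B}\mathbb{A}(\mu),\mathbb{B}\mathbf{f})$ for every admissible $\mu$.

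Next I would identify the online solution with a GMRES iterate. The online minimization $\min_{\mathbf{v}\in\mathcal{K}_m}\|\mathbb{B}\mathbf{f}-\mathbb{B}\mathbb{A}(\mu)\mathbf{v}\|_{\mathbb{M}}$ is taken over $\mathcal{K}_m=\mathcal{K}_m(\mathbb{B}\mathbb{A}(\mu),\mathbb{B}\mathbf{f})$, so by definition $\hat{\mathbf{u}}_\mu$ is the $m$-th iterate of GMRES for $\mathbb{B}\mathbb{A}(\mu)\mathbf{u}_\mu=\mathbb{B}\mathbf{f}$ with zero initial guess, the residual measured in the $\mathbb{M}$-norm. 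Writing $\mathbf{r}_m=\mathbb{B}\mathbf{f}-\mathbb{B}\mathbb{A}(\mu)\hat{\mathbf{u}}_\mu=\mathbb{B}\mathbb{A}(\mu)(\mathbf{u}_\mu-\hat{\mathbf{u}}_\mu)$ and $\mathbf{r}_0=\mathbb{B}\mathbf{f}$, it remains to bound $\|\mathbf{r}_m\|_{\mathbb{M}}/\|\mathbf{r}_0\|_{\mathbb{M}}$. To apply the classical estimate I would factor $\mathbb{M}=\mathbb{L}\mathbb{L}^t$ and change variables $\tilde{\mathbf{v}}=\mathbb{L}^t\mathbf{v}$, turning the $\mathbb{M}$-GMRES problem into a standard $\ell_2$-GMRES problem for $\mathbb{L}^t\mathbb{B}\mathbb{A}(\mu)\mathbb{L}^{-t}$; under this conjugation $\gamma(\mu)$ and $\Gamma(\mu)$ are exactly the smallest eigenvalue of the symmetric part and the spectral norm of the transformed operator. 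The Elman/field-of-values bound (cf.~\cite{Elman1982,Ma2016}) then yields $\|\mathbf{r}_m\|_{\mathbb{M}}\leq(1-\gamma(\mu)^2/\Gamma(\mu)^2)^{m/2}\|\mathbf{r}_0\|_{\mathbb{M}}$, which is the claim after substituting $\mathbf{r}_0=\mathbb{B}\mathbf{f}$.

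The main obstacle I anticipate lies in the first step: rigorously justifying that the GMRES residuals span the full $m$-dimensional Krylov subspace so that Lemma~\ref{invariancelemma} applies verbatim, together with the degenerate case where GMRES converges before step $m$ (then $\mathcal{K}_m$ is lower-dimensional but the bound holds trivially). The adaptation of the Elman estimate to the $\mathbb{M}$-inner product is routine through the $\mathbb{L}^t$-conjugation, but one must confirm $\gamma(\mu)>0$, i.e.\ that the $\mathbb{M}$-symmetric part of $\mathbb{B}\mathbb{A}(\mu)$ is positive-definite, for the bound to be meaningful.
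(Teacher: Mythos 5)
Your proposal is correct, and its skeleton matches the paper's proof: both establish that the offline space is the Krylov subspace $\mathcal{K}_m(\mathbb{B}\mathbb{A}(\mu_1),\mathbb{B}\mathbf{f})$, invoke Lemma \ref{invariancelemma} to identify it with $\mathcal{K}_m(\mathbb{B}\mathbb{A}(\mu),\mathbb{B}\mathbf{f})$, and then recognize $\hat{\mathbf{u}}_\mu$ as the $m$-th iterate of an $\mathbb{M}$-weighted GMRES for $\mathbb{B}\mathbb{A}(\mu)\mathbf{u}_\mu=\mathbb{B}\mathbf{f}$. Where you diverge is the last step: the paper does \emph{not} cite the Elman bound as a black box but reproves it directly in the $\mathbb{M}$-inner product, via the projection recursion $\mathbb{B}\mathbf{r}_m=(I-P_{\mathcal{W}_m})\mathbb{B}\mathbf{r}_{m-1}$ with $\mathcal{W}_m=\mathbb{B}\mathbb{A}(\mu)\mathcal{K}_m$, a Pythagorean identity, and an inf-sup lower bound that produces the factor $\gamma(\mu)/\Gamma(\mu)$; you instead factor $\mathbb{M}=\mathbb{L}\mathbb{L}^t$, conjugate to a standard $\ell_2$-GMRES problem for $\mathbb{L}^t\mathbb{B}\mathbb{A}(\mu)\mathbb{L}^{-t}$, check that $\gamma(\mu)$ and $\Gamma(\mu)$ are exactly the symmetric-part minimum eigenvalue and the spectral norm of the conjugated operator, and then quote the classical estimate. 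Your reduction is valid (the conjugation does map $\mathcal{K}_m(\mathbb{B}\mathbb{A}(\mu),\mathbb{B}\mathbf{f})$ onto $\mathcal{K}_m(\mathbb{L}^t\mathbb{B}\mathbb{A}(\mu)\mathbb{L}^{-t},\mathbb{L}^t\mathbb{B}\mathbf{f})$, and the two FOV quantities are preserved), and it buys brevity by reusing the literature; the paper's route buys self-containedness and exposes exactly where the FOV quantities enter, without ever forming a factorization of $\mathbb{M}$. The caveats you flag (possible stagnation of the offline GMRES so that the residuals do not span the full Krylov space, early termination, and $\gamma(\mu)>0$ for the bound to be nontrivial) are real but are glossed over by the paper as well, so they do not constitute a gap relative to its argument.
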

\begin{proof}
By the property of the GMRES method, the reduced basis subspace $\mathcal{K}_m$ in ${\rm RKBM}_1(\mathbb{B}, \mathbb{M}, \mathcal{P}, \mu_0, \mu_1, m)$ is indeed a Krylov subspace:
\[
   \mathcal{K}_m=\mathcal{K}_m(\mathbb{B}\mathbb{A}(\mu_1),\mathbb{B}\mathbf{f})={\rm Span}\big\{\mathbb{B}\mathbf{f}, \mathbb{B}\mathbb{A}(\mu_1)\mathbb{B}\mathbf{f}, \ldots, (\mathbb{B}\mathbb{A}(\mu_1))^m\mathbb{B}\mathbf{f}\big\}.
   \]
Lemma \ref{invariancelemma} implies that $\mathcal{K}_m(\mathbb{B}\mathbb{A}(\mu_1),\mathbb{B}\mathbf{f})$ is the same as $\mathcal{K}_m(\mathbb{B}\mathbb{A}(\mu),\mathbb{B}\mathbf{f})$, i.e., the Krylov subspace produced by the GMRES iteration for  $\mathbb{B}\mathbb{A}(\mu)\mathbf{u}_\mu=\mathbb{B}\mathbf{f}$. 

As a consequence, the accuracy of $\hat{\mathbf{u}}_\mu$ could be analyzed within the FOV framework. To indicate the dependence of $\hat{\mathbf{u}}_\mu$ on $m$, we set $\hat{\mathbf{u}}^m_{\mu}:=\hat{\mathbf{u}}_\mu$. In addition, we make use of  the residual $\mathbf{r}_m:=\mathbf{f}-\mathbb{A}(\mu)\hat{\mathbf{u}}^m_\mu$, the subspace $\mathcal{W}_m:=\mathbb{B}\mathbb{A}(\mu)\mathcal{K}_m$, and the orthogonal projection $P_{\mathcal{W}_m}$ onto $\mathcal{W}_m$ with respect to $(\bullet,\bullet)_{\mathbb{M}}$. The least-squares property of $\hat{\mathbf{u}}^m_{\mu}$ implies that 
\[\mathbb{B}\mathbb{A}(\mu)\hat{\mathbf{u}}_\mu=P_{\mathcal{W}_m}\mathbb{B}\mathbf{f}.\]
Let $I$ be the identity operator. Using the above identity and $I-P_{\mathcal{W}_m}=P_{\mathcal{W}^\perp_m}$ with $\mathcal{W}^\perp_{m}\subseteq\mathcal{W}^\perp_{m-1}$, we obtain
\begin{align*}
    \mathbb{B}\mathbf{r}_m&=(I-P_{\mathcal{W}_m})\mathbb{B}\mathbf{f}=P_{\mathcal{W}_m^\perp}\mathbb{B}\mathbf{f}\\
    &=P_{\mathcal{W}_m^\perp}P_{\mathcal{W}_{m-1}^\perp}\mathbb{B}\mathbf{f}=P_{\mathcal{W}_m^\perp}\mathbb{B}\mathbf{r}_{m-1}\\
    &=(I-P_{\mathcal{W}_m})\mathbb{B}\mathbf{r}_{m-1}.
\end{align*} 
In what follows,
\begin{equation}\label{recurrence}
    \begin{aligned}
        \|\mathbb{B}\mathbf{r}_m\|_{\mathbb{M}}^2&=\|\mathbb{B}\mathbf{r}_{m-1}\|_{\mathbb{M}}^2-\|P_{\mathcal{W}_m}\mathbb{B}\mathbf{r}_{m-1}\|_{\mathbb{M}}^2\\
        &=\|\mathbb{B}\mathbf{r}_{m-1}\|_{\mathbb{M}}^2-\Big(\sup_{\mathbf{0}\neq\mathbf{w}\in\mathcal{W}_m}\frac{(\mathbb{B}\mathbf{r}_{m-1},\mathbf{w})_{\mathbb{M}}}{\|\mathbf{w}\|_{\mathbb{M}}\|\mathbb{B}\mathbf{r}_{m-1}\|_{\mathbb{M}}}\Big)^2\|\mathbb{B}\mathbf{r}_{m-1}\|_{\mathbb{M}}^2.
    \end{aligned}
\end{equation}
To the estimate the supremum in \eqref{recurrence}, we proceed as follows:
\begin{equation}\label{supestimate}
    \begin{aligned}
&\sup_{\mathbf{0}\neq\mathbf{w}\in\mathcal{W}_m}\frac{|(\mathbb{B}\mathbf{r}_{m-1},\mathbf{w})_{\mathbb{M}}|}{\|\mathbf{w}\|_{\mathbb{M}}\|\mathbb{B}\mathbf{r}_{m-1}\|_{\mathbb{M}}}\geq\inf_{\mathbf{0}\neq\mathbf{v}\in\mathcal{K}_m}\sup_{\mathbf{0}\neq\mathbf{w}\in\mathcal{W}_m}\frac{|(\mathbf{v},\mathbf{w})_{\mathbb{M}}|}{\|\mathbf{w}\|_{\mathbb{M}}\|\mathbf{v}\|_{\mathbb{M}}}\\
&=\inf_{\mathbf{0}\neq\mathbf{v}\in\mathcal{K}_m}\sup_{\mathbf{0}\neq\mathbf{w}\in\mathcal{K}_m}\frac{|(\mathbf{v},\mathbb{B}\mathbb{A}(\mu)\mathbf{w})_{\mathbb{M}}|}{\|\mathbb{B}\mathbb{A}(\mu)\mathbf{w}\|_{\mathbb{M}}\|\mathbf{v}\|_{\mathbb{M}}}\geq\inf_{\mathbf{0}\neq\mathbf{v}\in\mathcal{K}_m}\frac{|(\mathbf{v},\mathbb{B}\mathbb{A}(\mu)\mathbf{v})_{\mathbb{M}}|}{\|\mathbb{B}\mathbb{A}(\mu)\mathbf{v}\|_{\mathbb{M}}\|\mathbf{v}\|_{\mathbb{M}}}\\
&\geq\inf_{\mathbf{0}\neq\mathbf{v}\in\mathbb{R}^n}\frac{|(\mathbb{B}\mathbb{A}(\mu)\mathbf{v},\mathbf{v})_{\mathbb{M}}|}{\|\mathbb{B}\mathbb{A}(\mu)\mathbf{v}\|_{\mathbb{M}}\|\mathbf{v}\|_{\mathbb{M}}}.
    \end{aligned}
\end{equation}
Using \eqref{supestimate} and the  simple splitting 
\[
\frac{|(\mathbb{B}\mathbb{A}(\mu)\mathbf{v},\mathbf{v})_{\mathbb{M}}|}{\|\mathbb{B}\mathbb{A}(\mu)\mathbf{v}\|_{\mathbb{M}}\|\mathbf{v}\|_{\mathbb{M}}}=\frac{|(\mathbb{B}\mathbb{A}(\mu)\mathbf{v},\mathbf{v})_{\mathbb{M}}|}{(\mathbf{v},\mathbf{v})_{\mathbb{M}}}\frac{\sqrt{(\mathbf{v},\mathbf{v})_{\mathbb{M}}}}{\|\mathbb{B}\mathbb{A}(\mu)\mathbf{v}\|_{\mathbb{M}}},
\]
we obtain the lower bound
\[
\sup_{\mathbf{0}\neq\mathbf{w}\in\mathcal{W}_m}\frac{|(\mathbb{B}\mathbf{r}_{m-1},\mathbf{w})_{\mathbb{M}}|}{\|\mathbf{w}\|_{\mathbb{M}}\|\mathbb{B}\mathbf{r}_{m-1}\|_{\mathbb{M}}}\geq\frac{\gamma(\mu)}{\Gamma(\mu)}.
\]
Combining it with \eqref{recurrence} leads to 
\[
\|\mathbb{B}\mathbf{r}_m\|_{\mathbb{M}}^2\leq\left(1-\frac{\gamma(\mu)^2}{\Gamma(\mu)^2}\right)\|\mathbb{B}\mathbf{r}_{m-1}\|_{\mathbb{M}}^2.
\]
The proof is complete.
\end{proof}

To illustrate the result in Theorem \ref{errorRKBM1}, we use ${\rm RKBM}_1(\mathbb{B}, \mathbb{M},\mathcal{P}, 0,\mu_1,m)$ with $\mathbb{B}=\mathbb{A}_1^{-1}$, $\mathbb{M}=\mathbb{A}_1$ to solve the parametric convection-diffusion problem in Subsection \ref{subsecConvectionDiffusion}. Recall that $\{\phi_i\}_{1\leq i\leq n}$ is a basis of $\mathcal{H}$. For $\mathbf{v}=(v_1,\ldots,v_n)^t$, let $v=(\phi_1,\ldots,\phi_n)\mathbf{v}$ be the finite element function in $\mathcal{H}$. Assume that $\mu_1$ is contained in some positive interval  $[\underline{\mu},\overline{\mu}]$. Then we have
\begin{equation}\label{gamma}
    \begin{aligned}
        (\mathbb{B}\mathbb{A}(\mu)\mathbf{v},\mathbf{v})_{\mathbb{M}}&=(\mathbb{A}(\mu)\mathbf{v},\mathbf{v})=\int_\Omega\mu_1\nabla v\cdot\nabla vdx+\cos(\mu_2)\int_\Omega(\bm{b}\cdot\nabla v)vdx\\
        &=\int_\Omega\mu_1\nabla v\cdot\nabla vdx\geq \underline{\mu}\|\mathbf{v}\|_{\mathbb{M}}^2,
    \end{aligned}
\end{equation}
where the third identity above relies on that $\bm{b}$ is divergence-free. On the other hand, let $c_P$ be the Poincar\'e constant, i.e., $\|w\|_{L^2(\Omega)}\leq c_P|w|_{H^1(\Omega)}$ for any $w\in\mathcal{H}$. Direct calculation shows that
\begin{equation}\label{Gamma}
    \begin{aligned}
        &\|\mathbb{B}\mathbb{A}(\mu)\mathbf{v}\|_{\mathbb{M}}=\sup_{\mathbf{0}\neq\mathbf{w}\in\mathbb{R}^n}\frac{(\mathbb{B}\mathbb{A}(\mu)\mathbf{v},\mathbf{w})_{\mathbb{M}}}{\|\mathbf{w}\|_{\mathbb{M}}}=\sup_{\mathbf{0}\neq\mathbf{w}\in\mathbb{R}^n}\frac{(\mathbb{A}(\mu)\mathbf{v},\mathbf{w})}{\|\mathbf{w}\|_{\mathbb{M}}}\\
        &=\sup_{w\in\mathcal{H}, |w|_{H^1(\Omega)}=1}\left(\mu_1\int_\Omega\nabla v\cdot\nabla wdx+\cos(\mu_2)\int_\Omega (\bm{b}\cdot\nabla v) wdx\right)\\
        &\leq\overline{\mu}|v|_{H^1(\Omega)}+c_P\|\bm{b}\|_{L^\infty(\Omega)}|v|_{H^1(\Omega)} =(\bar{\mu}+c_P\|\bm{b}\|_{L^\infty(\Omega)})\|\mathbf{v}\|_{\mathbb{M}}.
    \end{aligned}
\end{equation}
It then follows from that \eqref{gamma} and \eqref{Gamma} that 
\begin{align*}
    \gamma(\mu)&\geq\underline{\mu},\\
    \Gamma(\mu)&\leq\bar{\mu}+c_P\|\bm{b}\|_{L^\infty(\Omega)}.
\end{align*}
Combining Theorem \ref{errorRKBM1} with those  bounds of $\gamma(\mu)$, $\Gamma(\mu)$, we obtain an error estimate of the reduced basis solution $\hat{\mathbf{u}}_\mu$ from ${\rm RKBM}_1(\mathbb{B}, \mathbb{M}, \mathcal{P}, 0,\mu_1,m)$ for the convection-diffusion problem in Subsection \ref{subsecConvectionDiffusion}:
\begin{equation*}
      \|\mathbb{B}\mathbb{A}(\mu)(\mathbf{u}_\mu-\hat{\mathbf{u}}_\mu)\|_{\mathbb{M}}\leq\left(1-\frac{\underline{\mu}^2}{(\bar{\mu}+c_P\|\bm{b}\|_{L^\infty(\Omega)})^2}\right)^{\frac{m}{2}}\|\mathbb{B}\mathbf{f}\|_{\mathbb{M}}.
  \end{equation*}

\subsection{Convergence of BiCG-type Reduced Basis}
In comparison to the GMRES,
the convergence theory of BiCG-type iterative solvers is not well-developed. Among existing literature, the work \cite{BankChan1993} established a convergence estimate of BiCG-type methods using Babuska's inf-sup theory. Following the idea in \cite{BankChan1993}, we present a best approximation error estimate of the BiCG-type RBM in the next theorem.

\begin{theorem}\label{errorRKBM2}
Let $\mathbb{A}(\mu)=\theta_1(\mu)\mathbb{A}_1+\theta_2(\mu)\mathbb{A}_2$, $\mathbb{B}=\mathbb{A}(\mu_0)^{-1}$, $\kappa(\mu)=\kappa(\mathbb{B}\mathbb{A}(\mu))$. Let $\mathcal{K}_m^{\mu}=\mathcal{K}_m(\mathbb{B}\mathbb{A}(\mu),\mathbb{B}\mathbf{f})$ and $\mathcal{L}_m^{\mu}=\mathcal{K}_m(\mathbb{B}^t\mathbb{A}(\mu)^t,\mathbb{B}^t\mathbf{r}_0^*)$. Assume that there exist SPD matrices $\mathbb{M}, \mathbb{M}^\prime$ and constants $\alpha(\mu), \beta(\mu)>0$ such that 
\begin{subequations}\label{infsup}
\begin{align}
    &(\mathbb{A}(\mu)\mathbf{v},\mathbf{w})\leq\alpha(\mu)\|\mathbf{v}\|_{\mathbb{M}}\|\mathbf{w}\|_{\mathbb{M}^\prime},\\
    &\inf_{\mathbf{0}\neq\mathbf{v}\in\mathcal{K}_m^\mu}\sup_{\mathbf{0}\neq\mathbf{w}\in\mathcal{L}^\mu_m}\frac{(\mathbb{A}(\mu)\mathbf{v},\mathbf{w})}{\|\mathbf{v}\|_{\mathbb{M}}\|\mathbf{w}\|_{\mathbb{M}^\prime}}\geq\beta(\mu)
\end{align}
\end{subequations}
for all $\mathbf{v}, \mathbf{w}\in\mathbb{R}^n$. Assume $\theta_1(\mu_0)\theta_2(\mu_1)-\theta_2(\mu_0)\theta_1(\mu_1)\neq0$. 
Then for any $\mu\in\mathcal{P}$ with $\theta_1(\mu_0)\theta_2(\mu)-\theta_2(\mu_0)\theta_1(\mu)\neq0$, the solution $\hat{\mathbf{u}}_\mu$ of ${\rm RKBM}_2(\mathbb{B}, \mathcal{P}, \mu_0, \mu_1, m)$ satisfies
\begin{equation}\label{quasioptimal}
      \|\mathbf{u}_\mu - \hat{\mathbf{u}}_\mu\|_{\mathbb{M}}\leq \frac{\alpha(\mu)}{\beta(\mu)}\inf_{\mathbf{v}\in\mathcal{K}_m^{\mu}}\|\mathbf{u}_{\mu}-\mathbf{v}\|_{\mathbb{M}}.
  \end{equation}
\end{theorem}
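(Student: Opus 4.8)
The plan is to recognize $\hat{\mathbf{u}}_\mu$ as the image of $\mathbf{u}_\mu$ under a Petrov--Galerkin projection and then exploit the sharp quasi-optimality bound available for such projections. First I would invoke Lemma \ref{invariancelemma} to identify the two reduced subspaces produced by ${\rm RKBM}_2$. Since the BiCG direction vectors satisfy ${\rm Span}\{\mathbf{p}_0,\dots,\mathbf{p}_{m-1}\}=\mathcal{K}_m(\mathbb{B}\mathbb{A}(\mu_1),\mathbb{B}\mathbf{f})$ and ${\rm Span}\{\mathbf{p}^*_0,\dots,\mathbf{p}^*_{m-1}\}=\mathcal{K}_m(\mathbb{B}^t\mathbb{A}(\mu_1)^t,\mathbb{B}^t\mathbf{r}^*_0)$, the lemma (using the assumed non-collinearity of the parameter pairs) gives $\mathcal{K}_m={\rm col}(\mathbb{P})=\mathcal{K}_m^\mu$ and $\mathcal{L}_m={\rm col}(\mathbb{Q})=\mathcal{L}_m^\mu$ for every admissible $\mu$. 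Thus the online problem of ${\rm RKBM}_2$ is exactly the Petrov--Galerkin scheme of finding $\hat{\mathbf{u}}_\mu\in\mathcal{K}_m^\mu$ with $(\mathbb{A}(\mu)\hat{\mathbf{u}}_\mu,\mathbf{w})=(\mathbf{f},\mathbf{w})$ for all $\mathbf{w}\in\mathcal{L}_m^\mu$.

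Next I would introduce the linear map $P\colon\mathbb{R}^n\to\mathcal{K}_m^\mu$ defined by $P\mathbf{x}\in\mathcal{K}_m^\mu$ and $(\mathbb{A}(\mu)P\mathbf{x},\mathbf{w})=(\mathbb{A}(\mu)\mathbf{x},\mathbf{w})$ for all $\mathbf{w}\in\mathcal{L}_m^\mu$. The discrete inf-sup bound in \eqref{infsup} together with $\dim\mathcal{K}_m^\mu=\dim\mathcal{L}_m^\mu=m$ renders the defining square linear system injective, hence uniquely solvable, so $P$ is well defined, idempotent, and equal to the identity on $\mathcal{K}_m^\mu$. Because $\mathbf{u}_\mu$ solves the full problem, $(\mathbb{A}(\mu)\mathbf{u}_\mu,\mathbf{w})=(\mathbf{f},\mathbf{w})$ for all $\mathbf{w}\in\mathcal{L}_m^\mu$, which identifies $\hat{\mathbf{u}}_\mu=P\mathbf{u}_\mu$. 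To bound the operator norm $\|P\|_{\mathbb{M}}$, for arbitrary $\mathbf{x}$ I set $\mathbf{y}=P\mathbf{x}\in\mathcal{K}_m^\mu$ and chain the two hypotheses in \eqref{infsup}: $\beta(\mu)\|\mathbf{y}\|_{\mathbb{M}}\le\sup_{\mathbf{0}\neq\mathbf{w}\in\mathcal{L}_m^\mu}(\mathbb{A}(\mu)\mathbf{y},\mathbf{w})/\|\mathbf{w}\|_{\mathbb{M}^\prime}=\sup_{\mathbf{0}\neq\mathbf{w}\in\mathcal{L}_m^\mu}(\mathbb{A}(\mu)\mathbf{x},\mathbf{w})/\|\mathbf{w}\|_{\mathbb{M}^\prime}\le\alpha(\mu)\|\mathbf{x}\|_{\mathbb{M}}$, whence $\|P\|_{\mathbb{M}}\le\alpha(\mu)/\beta(\mu)$.

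The final and most delicate step is to pass from this operator-norm bound to the error estimate with the \emph{same} constant $\alpha(\mu)/\beta(\mu)$ rather than the $1+\alpha(\mu)/\beta(\mu)$ that a plain triangle inequality would give. Here I would appeal to the known projection identity that any nontrivial bounded projection $P$ on a Hilbert space satisfies $\|I-P\|=\|P\|$. Since $(I-P)\mathbf{v}=\mathbf{0}$ for every $\mathbf{v}\in\mathcal{K}_m^\mu$, I can write $\mathbf{u}_\mu-\hat{\mathbf{u}}_\mu=(I-P)\mathbf{u}_\mu=(I-P)(\mathbf{u}_\mu-\mathbf{v})$ and conclude $\|\mathbf{u}_\mu-\hat{\mathbf{u}}_\mu\|_{\mathbb{M}}\le\|I-P\|_{\mathbb{M}}\|\mathbf{u}_\mu-\mathbf{v}\|_{\mathbb{M}}=\|P\|_{\mathbb{M}}\|\mathbf{u}_\mu-\mathbf{v}\|_{\mathbb{M}}\le(\alpha(\mu)/\beta(\mu))\|\mathbf{u}_\mu-\mathbf{v}\|_{\mathbb{M}}$; the infimum over $\mathbf{v}\in\mathcal{K}_m^\mu$ then yields \eqref{quasioptimal}. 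I expect the projection-norm identity to be the crux, since it is precisely what distinguishes the sharp constant from the textbook C\'ea estimate and it requires $P\neq0,I$, i.e. $0<m<n$; a secondary point needing care is the well-posedness of $P$, which rests entirely on the discrete inf-sup condition rather than on any coercivity.
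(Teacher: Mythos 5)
Your proposal is correct and follows essentially the same route as the paper: Lemma \ref{invariancelemma} to identify $\mathcal{K}_m,\mathcal{L}_m$ with the $\mu$-dependent Krylov pair, the Petrov--Galerkin projection onto $\mathcal{K}_m^\mu$ bounded in the $\mathbb{M}$-norm by $\alpha(\mu)/\beta(\mu)$ via \eqref{infsup}, and the Xu--Zikatanov identity $\|I-P\|_{\mathbb{M}}=\|P\|_{\mathbb{M}}$ to obtain the sharp constant. In fact your writeup is slightly more careful than the paper's, which in the displayed definition of $P_{\mathcal{K}_m^\mu}$ and the subsequent supremum writes the test space as $\mathcal{K}_m^\mu$ and $\mathbb{R}^n$ where, as you have it, it must be $\mathcal{L}_m^\mu$ for the Petrov--Galerkin identity and the inf-sup bound to apply.
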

\begin{proof}
It follows from the property of the BiCG method that $\mathcal{K}_m$ and  $\mathcal{L}_m$ in Algorithm \ref{a:RKBM1} are 
\begin{align*}
    &\mathcal{K}_m=\mathcal{K}_m(\mathbb{B}\mathbb{A}(\mu_1),\mathbb{B}\mathbf{f})={\rm Span}\big\{\mathbb{B}\mathbf{f},\mathbb{B}\mathbb{A}(\mu_1)\mathbb{B}\mathbf{f},\ldots, (\mathbb{B}\mathbb{A}(\mu_1))^m\mathbb{B}\mathbf{f}\big\},\\
    &\mathcal{L}_m=\mathcal{K}_m(\mathbb{B}^t\mathbb{A}(\mu_1)^t,\mathbb{B}\mathbf{f})={\rm Span}\big\{\mathbb{B}\mathbf{r}_0^*,\mathbb{B}^t\mathbb{A}(\mu_1)^t\mathbb{B}\mathbf{r}_0^*,\ldots, (\mathbb{B}^t\mathbb{A}(\mu_1)^t)^m\mathbb{B}\mathbf{r}_0^*\big\}.
\end{align*}
Then using Lemma \ref{invariancelemma}, we conclude that 
\begin{align*}
    \mathcal{K}_m&=\mathcal{K}_m(\mathbb{B}\mathbb{A}(\mu),\mathbb{B}\mathbf{f}),\\
    \mathcal{L}_m&=\mathcal{K}_m(\mathbb{B}^t\mathbb{A}(\mu)^t,\mathbb{B}\mathbf{f}),
\end{align*}
i.e., $\mathcal{K}_m$ and $\mathcal{L}_m$ are the two Krylov subspaces produced by  the BiCG method for $\mathbb{B}\mathbb{A}(\mu)\mathbf{u}_\mu=\mathbb{B}\mathbf{f}$ within $m$ iterations.
Recall that $\hat{\mathbf{u}}_\mu\in\mathcal{K}^\mu_m$ in ${\rm RKBM}_1$ solves the Petrov--Galerkin problem:
\begin{equation*}
(\mathbb{A}(\mu)\hat{\mathbf{u}}_\mu,\mathbf{v})=(\mathbf{f},\mathbf{v}),\quad \mathbf{v}\in\mathcal{L}^\mu_m.
\end{equation*}
The classical property of the BiCG implies that $\hat{\mathbf{u}}_\mu$ is also the $m$-th BiCG iterate of $\mathbb{B}\mathbb{A}(\mu)\mathbf{u}_\mu=\mathbb{B}\mathbf{f}$. Therefore, the error analysis of $\|\mathbf{u}_\mu - \hat{\mathbf{u}}_\mu\|_{\mathbb{M}}$ is equivalent to the convergence rate of the BiCG method.

Let $P_{\mathcal{K}^\mu_m}: \mathbb{R}^n\rightarrow\mathbb{R}^n$ be the Petrov--Galerkin projection onto $\mathcal{K}^\mu_m$, i.e., $P_{\mathcal{K}^\mu_m}\mathbf{v}\in\mathcal{K}^\mu_m$ is determined by
\[
(\mathbb{A}(\mu)P_{\mathcal{K}^\mu_m}\mathbf{v},\mathbf{w})=(\mathbb{A}(\mu)\mathbf{v},\mathbf{w}),\quad\forall\mathbf{w}\in\mathcal{K}^\mu_m.
\]
The conditions \eqref{infsup} ensure that 
\begin{equation*}
    \begin{aligned}
        &\beta(\mu)\|P_{\mathcal{K}^\mu_m}\mathbf{v}\|_{\mathbb{M}}\leq\sup_{\mathbf{0}\neq\mathbf{w}\in\mathbb{R}^n}\frac{(\mathbb{A}(\mu)P_{\mathcal{K}^\mu_m}\mathbf{v},\mathbf{w})}{\|\mathbf{w}\|_{\mathbb{M}^\prime}}\\
        &\quad=\sup_{\mathbf{0}\neq\mathbf{w}\in\mathbb{R}^n}\frac{(\mathbb{A}(\mu)\mathbf{v},\mathbf{w})}{\|\mathbf{w}\|_{\mathbb{M}^\prime}}\leq\alpha(\mu)\|\mathbf{v}\|_{\mathbb{M}}
    \end{aligned}
\end{equation*}
and thus 
\begin{equation}\label{Pkmbound}
\|P_{\mathcal{K}^\mu_m}\|_{\mathbb{M}}\leq\frac{\alpha(\mu)}{\beta(\mu)}.   
\end{equation}
Combining \eqref{Pkmbound} with the identity $\|I-P_{\mathcal{K}^\mu_m}\|_{\mathbb{M}}=\|P_{\mathcal{K}^\mu_m}\|_{\mathbb{M}}$ (cf.~\cite{XuZikatanov2003}), for any $\mathbf{v}\in\mathcal{K}_m^\mu$ we have 
\begin{equation*}
\begin{aligned}
        &\|\mathbf{u}_\mu - \hat{\mathbf{u}}_\mu\|_{\mathbb{M}}=\|\mathbf{u}_\mu - P_{\mathcal{K}^\mu_m}\mathbf{u}_\mu\|_{\mathbb{M}}\\
        &=\|(I-P_{\mathcal{K}^\mu_m})(\mathbf{u}_\mu - \mathbf{v})\|_{\mathbb{M}}\\
        &\leq\|I-P_{\mathcal{K}^\mu_m}\|\inf_{\mathbf{v}\in\mathcal{K}^\mu_m}\|\mathbf{u}_\mu - \mathbf{v}\|_{\mathbb{M}}\\
        &\leq\frac{\alpha(\mu)}{\beta(\mu)}\inf_{\mathbf{v}\in\mathcal{K}^\mu_m}\|\mathbf{u}_\mu - \mathbf{v}\|_{\mathbb{M}}.
    \end{aligned}
\end{equation*}
The proof is complete.
\end{proof}

Theorem \ref{errorRKBM2} states that ${\rm RKBM_2}$ achieves the best approximation error provided by the Krylov subspace $\mathcal{K}_m^\mu$ up to some multiplicative constant. A convergence estimate of the form \eqref{quasioptimal} with $\frac{\alpha(\mu)}{\beta(\mu)}$ replaced by $1+\frac{\alpha(\mu)}{\beta(\mu)}$ was first developed by \cite{BankChan1993} for a composite-step BiCG method. Let $\mathcal{P}_m^0$ denote the set of polynomials of degree $\leq m$ taking value 1 at 0. The estimate \eqref{quasioptimal} further implies  
\begin{equation}\label{BiCGerror}
      \|\mathbf{u}_\mu - \hat{\mathbf{u}}_\mu\|_{\mathbb{M}}\leq \frac{\alpha(\mu)}{\beta(\mu)}\inf_{\varphi\in\mathcal{P}_m^0}\|\varphi(\mathbb{M}^\frac{1}{2}\mathbb{B}\mathbb{A}(\mu)\mathbb{M}^{-\frac{1}{2}})\|\|\mathbf{u}_{\mu}\|_{\mathbb{M}}.
\end{equation}
As pointed out in \cite{BankChan1993}, there is no simple estimate for the infimum in \eqref{BiCGerror}. However, under mild assumptions, it is possible to bound $\inf_{\varphi\in\mathcal{P}_m^0}\|\varphi(\mathbb{M}^\frac{1}{2}\mathbb{B}\mathbb{A}(\mu)\mathbb{M}^{-\frac{1}{2}})\|$ in \eqref{BiCGerror} by Manteuffel's estimate, see, e.g., \cite{BankChan1993,Manteuffel1977} for details.

\section{Parametric PDEs with Multiple Parameters}\label{secmultipara}
In this section, we discuss variants of RKBMs that are more suitable for  the parametric variational problem \eqref{multiparabilinear} or \eqref{multiparamatrix}
with multiple parameters ($J>2$). 

For example, let $\{\Omega_j\}_{j=1}^J$ be a non-overlapping decomposition  of the domain $\Omega$, and $\alpha_\mu$ be a piecewise constant function with $$\mu=(\nu_1,\ldots,\nu_J),\qquad\alpha_\mu|_{\Omega_j}=\nu_j,$$see, e.g., Figure \ref{fig:pwcoeff} for an illustration. Let $\mathcal{H}\subset H_0^1(\Omega)$ be a conforming finite element subspace. We consider the second order elliptic equation
\begin{align*}
    -\nabla\cdot(\alpha_\mu\nabla \tilde{u}_\mu)&=f\quad\text{ in }\Omega,\\
    \tilde{u}_\mu&=0\quad\text{ on }\partial\Omega.
\end{align*}
The corresponding FEM is to find $u_\mu\in\mathcal{H}$ satisfying
\begin{equation}\label{multiparaelliptic}
\sum_{j=1}^J\nu_j\int_{\Omega_j}\nabla u_\mu\cdot\nabla vdx=\int_\Omega fvdx,\quad v\in\mathcal{H}.
\end{equation}
The above equation is a classical model problem of RBMs. Clearly \eqref{multiparaelliptic} is of the form
\eqref{multiparabilinear} with 
\begin{equation*}
   \theta_j(\mu)=\nu_j,\quad a_j(v,w)=\int_{\Omega_j}\nabla v\cdot\nabla wdx,\quad 1\leq j\leq4.
\end{equation*}

\begin{figure}[H]
    \centering
    \includegraphics[width=4.5cm,height=4.2cm]{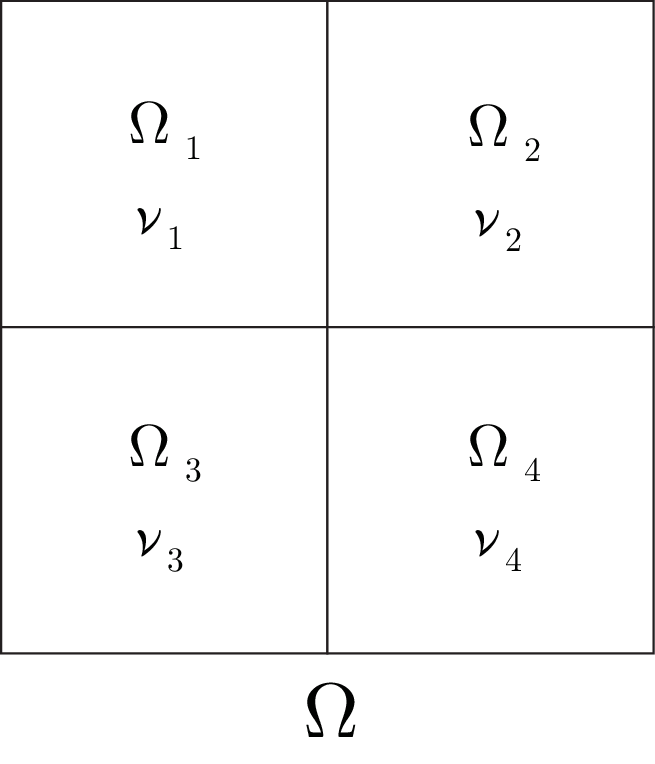}
    \caption{Piecewise constant diffusion coefficient $\alpha_\mu=\sum_{i=1}^4\nu_i\mathbbm{1}_{\Omega_i}$.}\label{fig:pwcoeff}
\end{figure}

Our error analysis in Sections \ref{secSPD} and \ref{secNSPD} only applies to the parametric PDE \eqref{multiparabilinear} with two parameter functions $\theta_1, \theta_2$. In practice, it turns out that the RKBMs based on a single Krylov subspace are not able to achieve high accuracy for \eqref{multiparabilinear} or \eqref{multiparamatrix}, see numerical experiments in Subsection \ref{subsecpwcoeff} for details. To improve the performance of the proposed algorithms, we choose to run Krylov subspace methods multiple times and use the sum of several Krylov subspaces as the reduced basis subspace. The corresponding generalized RKBMs are presented in Algorithms \ref{a:multiRKBM1} and \ref{a:multiRKBM2}.

\begin{algorithm}[H]
  \caption{${\rm mRKBM}_1(\mathbb{B},\mathbb{M}, \mathcal{P}, \mu_0, \mu_1,\ldots,\mu_L,  m)$ for solving $\mathbb{A}(\mu)\mathbf{u}_\mu=\mathbf{f}$}\label{a:multiRKBM1}
  \begin{algorithmic}
    \State \textbf{Input:} a parameter set $\mathcal{P}$ of practical interest,  $\mu_0, \mu_1, \ldots, \mu_L\in\mathcal{P}$ with $\mu_0\not\in\{\mu_1,\ldots,\mu_L\}$, an integer $m>0$, a SPD matrix $\mathbb{M}\in\mathbb{R}^{n\times n}$, a preconditioner $\mathbb{B}: \mathbb{R}^n\rightarrow\mathbb{R}^n$ such that $\mathbb{B}\approx\mathbb{A}(\mu_0)^{-1}$.
    \State \textbf{Offline Stage:} 
    \For{$l=1:L$}
    \State use the GMRES for $\mathbb{B}\mathbb{A}(\mu_l)\mathbf{u}_{\mu_l}=\mathbb{B}\mathbf{f}$ with initial guess $\mathbf{u}_{l,0}=\mathbf{0}$ to generate
    
    \State  iterative solutions $\mathbf{u}_{l,1}, \ldots, \mathbf{u}_{l,m-1}\approx\mathbf{u}_{\mu_l}$; 
    \State set  $\mathbf{z}_{l,j}=\mathbb{B}(\mathbf{f}-\mathbb{A}(\mu_l)\mathbf{u}_{l,j})$ for $0\leq j\leq m-1$;
   \EndFor
       
   \State construct the reduced basis subspace \[\mathcal{K}_m:= {\rm Span}\big\{\mathbf{z}_{l,j}\big\}_{1\leq l\leq L, 1\leq j\leq m}.\]

   \State \textbf{Online Stage:} for each $\mu\in\mathcal{P}$, compute $\hat{\mathbf{u}}_\mu\in\mathcal{K}_m$ by  
\begin{equation*}
\|\mathbb{B}\mathbf{f}-\mathbb{B}\mathbb{A}(\mu)\hat{\mathbf{u}}_\mu\|_{\mathbb{M}}=\min_{\mathbf{v}\in\mathcal{K}_m}\|\mathbb{B}\mathbf{f}-\mathbb{B}\mathbb{A}(\mu)\mathbf{v}\|_{\mathbb{M}}.
\end{equation*} 
  \end{algorithmic}  
\end{algorithm} 

\begin{algorithm}[H]
  \caption{${\rm mRKBM}_2(\mathbb{B}, \mathcal{P}, \mu_0, \mu_1,\ldots,\mu_L,  m)$ for solving $\mathbb{A}(\mu)\mathbf{u}_\mu=\mathbf{f}$}\label{a:multiRKBM2}
  \begin{algorithmic}
    \State Input: a parameter set $\mathcal{P}$ of practical interest,  $\mu_0, \mu_1, \ldots, \mu_L\in\mathcal{P}$ with $\mu_0\not\in\{\mu_1,\ldots,\mu_L\}$, an integer $m>0$, a preconditioner $\mathbb{B}: \mathbb{R}^n\rightarrow\mathbb{R}^n$ for $\mathbb{A}(\mu_0)$.
    \State \textbf{Offline Stage:} 
    \For{$l=1:L$}
    \State use the BiCG for $\mathbb{B}\mathbb{A}(\mu_l)\mathbf{u}_{\mu_l}=\mathbb{B}\mathbf{f}$ and $\mathbb{B}^t\mathbb{A}(\mu_l)^t\mathbf{u}^*_{\mu_l}=\mathbb{B}^t\mathbf{r}_0^*$ with initial guesses \State $\mathbf{u}_{l,0}=\mathbf{0}$, $\mathbf{u}^*_{l,0}=\mathbf{0}$  to generate iterative solutions $\mathbf{u}_{l,1}, \ldots, \mathbf{u}_{l,m-1}\approx\mathbf{u}_{\mu_l}$, \State $\mathbf{u}^*_{l,1}, \ldots, \mathbf{u}^*_{l,m-1}\approx\mathbf{u}^*_{\mu_l}$; for $j=0, 1, \ldots, m$, set $$\mathbf{z}_{l,j}=\mathbb{B}(\mathbf{f}-\mathbb{A}(\mu_l)\mathbf{u}_{l,j}),\quad\mathbf{z}^*_{l,j}=\mathbb{B}^t(\mathbf{r}_0^*-\mathbb{A}(\mu_l)^t\mathbf{u}^*_{l,j});$$
   \EndFor
       
   \State construct the reduced basis subspace \[\mathcal{K}_m:= {\rm Span}\big\{\mathbf{z}_{l,j}\big\}_{1\leq l\leq L, 1\leq j\leq m},\quad \mathcal{L}_m:= {\rm Span}\big\{\mathbf{z}^*_{l,j}\big\}_{1\leq l\leq L, 1\leq j\leq m}.\]

   \State \textbf{Online Stage:} for each $\mu\in\mathcal{P}$, compute $\hat{\mathbf{u}}_\mu\in\mathcal{K}_m$ by 
\begin{equation*}
(\mathbb{A}(\mu)\hat{\mathbf{u}}_\mu,\mathbf{v})=(\mathbf{f},\mathbf{v}),\quad\mathbf{v}\in\mathcal{L}_m.
\end{equation*} 
  \end{algorithmic}  
\end{algorithm} 

When $\mathbb{A}(\mu)$ is SPD, the algorithm ${\rm mRKBM}_2$ reduces to a variant of the RCGBM based on PCG iterations for $\mathbb{B}\mathbb{A}(\mu)\mathbf{u}_\mu=\mathbb{B}\mathbf{f}$ at multiple parameter instances $\mu=\mu_1, \ldots, \mu_L$, see Algorithm \ref{a:multiRCGBM}. 

\begin{algorithm}[H]
  \caption{${\rm mRCGBM}(\mathbb{B}, \mu_0, \mu_1,\ldots,\mu_L,  m)$ for solving $\mathbb{A}(\mu)\mathbf{u}_\mu=\mathbf{f}$}\label{a:multiRCGBM}
  \begin{algorithmic}
    \State \textbf{Input:} a parameter set $\mathcal{P}$ of practical interest,  $\mu_0, \mu_1, \ldots, \mu_L\in\mathcal{P}$ with $\mu_0\not\in\{\mu_1,\ldots,\mu_L\}$, an integer $m>0$, a preconditioner $\mathbb{B}: \mathbb{R}^n\rightarrow\mathbb{R}^n$ for $\mathbb{A}(\mu_0)$.
    \State \textbf{Offline Stage:} 
    \For{$l=1:L$}
    \State set 
    $\mathbf{u}_{l,0}=\mathbf{0}$,
    $\mathbf{r}_{l,0}=\mathbf{f}$, $\mathbf{p}_{l,0}=\mathbf{z}_{l,0}=\mathbb{B}\mathbf{r}_{l,0}$;
 use the PCG for $\mathbb{B}\mathbb{A}(\mu_l)\mathbf{u}_{\mu_l}=\mathbb{B}\mathbf{f}$ with \State initial guess  $\mathbf{u}_{l,0}=\mathbf{0}$ to generate conjugate gradients $\mathbf{p}_{l,0}, \ldots, \mathbf{p}_{l,m-1}$; set \begin{align*}
       \mathcal{K}_m:={\rm Span}\left\{\frac{\mathbf{p}_{1,0}}{\|\mathbf{p}_{1,0}\|},\dots,\frac{\mathbf{p}_{1,m-1}}{\|\mathbf{p}_{1,m-1}\|},\ldots,\frac{\mathbf{p}_{L,0}}{\|\mathbf{p}_{L,0}\|},\dots,\frac{\mathbf{p}_{L,m-1}}{\|\mathbf{p}_{L,m-1}\|}\right\}.
   \end{align*}
   \EndFor
   \State \textbf{Online Stage:} for each $\mu\in\mathcal{P}$, find the reduced basis solution $\hat{\mathbf{u}}_\mu\in\mathcal{K}_m$ satisfying 
\begin{equation*}
(\mathbb{A}(\mu)\hat{\mathbf{u}}_\mu,\mathbf{v})=(\mathbf{f},\mathbf{v}),\quad \mathbf{v}\in\mathcal{K}_m.
\end{equation*} 
  \end{algorithmic}  
\end{algorithm} 

We remark that the spanning vectors in Algorithms \ref{a:multiRKBM1}, \ref{a:multiRKBM2}, and \ref{a:multiRCGBM} do not form a basis of the reduced basis subspace $\mathcal{K}_m$. However, it is easy to compute the reduced basis solution $\hat{\mathbf{u}}_\mu$ by solving an $m\times m$ small-scale least-squares problem.

\section{Numerical Experiments}\label{secNE}
To validate the efficiency of the proposed reduced Krylov basis methods, we test the performance of the RCGBM, ${\rm RKBM}_1$, ${\rm RKBM}_2$ for a series of parametric PDEs including  Maxwell equations, Navier--Lam\'e equations, convection-diffusion equations, and second order elliptic equations with piecewise coefficients. 

In each experiment, our 2-dimensional mesh refinement is through dividing each triangle into four similar sub-triangles. Similarly, 3-dimensional meshes are generated by successive regular refinement in the sense of \cite{Bey2000}, that is, dividing each tetrahedral element into eight sub-elements. In each tables, by $nv, ne$ we denote the number of vertices and number of edges in the mesh, respectively.
All numerical experiments are performed in MATLAB 2022b.

\subsection{H(curl) Elliptic Equation}\label{subsec:cube}
The first experiment is concerned with the eddy current Maxwell equation explained in Subsection \ref{subsecMaxwell}. We set $\Omega=[0,1]^3$, $\bm{f}=(10,10,10)^t$, and use the lowest order N\'ed\'elec edge element to construct the high-fidelity approximation $\bm{u}_\mu$ for each $\mu\in\mathcal{P}=(1:0.4:3)\times(1:0.4:3)$. In this case, $ne$ is the number of unknowns in the finite element linear systems of equations.

Then ${\rm RCGBM}(\mathbb{B},\mathcal{P},\mu_0,\mu_1,m)$ is applied to the edge element discretization, where $\mu_0=(1,1)$, $\mu_1=(1,2)$, and $\mathbb{B}$ is the Hiptmair--Xu (HX) preconditioner (cf.~\cite{HiptmairXu2007,Li2023FoCM}) for $\mathbb{A}(\mu_1)$ such that $\mathbb{B}\approx\mathbb{A}(\mu_1)^{-1}$. 
Here we use one algebraic multigrid (AMG) W-cycle as the Poisson solvers to construct the HX preconditioner for the Maxwell $H({\rm curl})$ systems. Numerical results are presented in Figure and Table \ref{tab:errorMaxwell}, where all errors are measured in the $H({\rm curl})$ norm $$\|\bm{v}\|_{H({\rm curl})}:=\big(\|\bm{v}\|^2_{L^2(\Omega)}+\|\nabla\times\bm{v}\|^2_{L^2(\Omega)}\big)^\frac{1}{2}.$$

It is observed from Table \ref{tab:errorMaxwell} and Figure \ref{fig:errorMaxwell} that as the number $m$ of conjugate gradient reduced basis grows, the error $\sup_{\mu\in\mathcal{P}}\|\bm{u}_\mu-\hat{\bm{u}}_\mu\|_{H({\rm curl})}$ is decreasing. In particular, a small $m\approx10$ is able to guarantee high accuracy of the RCGBM. Figure \ref{tab:errorMaxwell} shows that the reduced basis solution $\hat{\bm{u}}_\mu$ remains uniformly accurate for varying $\mu\in\mathcal{P}$. We remark that ${\rm RCGBM}(\mathbb{B},\mathcal{P},\mu_0,\mu_1,m)$ called the PCG  with the HX preconditioner $\mathbb{B}$ only one time. In contrast, directly solving $\mathbb{A}(\mu)\mathbf{u}_\mu=\mathbf{f}$ for all $\mu\in\mathcal{P}$ requires running the PCG function $\#\mathcal{P}=25$ times and thus is much slower, especially for large-scale problems, e.g., $ne=1872064$.

\begin{figure}[H]
    \centering
\includegraphics[width=10cm,height=6cm]{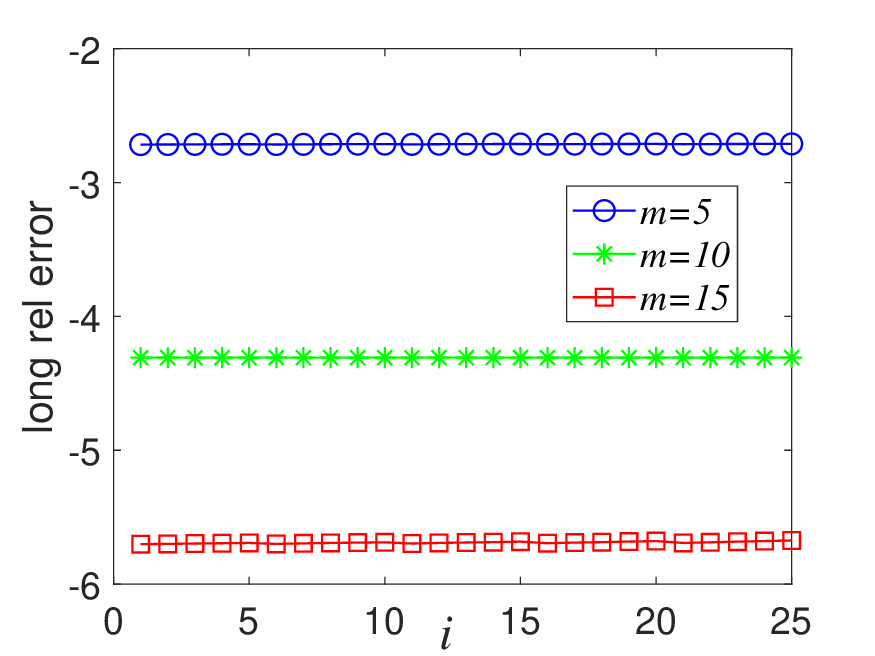}
    \caption{$\log_{10}(\|\bm{u}_{\tilde{\mu}_i}-\hat{\bm{u}}_{\tilde{\mu}_i}\|_{H({\rm curl})}/\|\bm{u}_{\tilde{\mu}_i}\|_{H({\rm curl})})$ of ${\rm RCGBM}(\mathbb{B},\mathcal{P},(1,1),(1,2),m)$ for  $\tilde{\mu}_i\in\mathcal{P}$, where $\tilde{\mu}_1=(1,1), \ldots, \tilde{\mu}_{25}=(3,3)$ are ordered in lexicographic way, $ne=1872064$.}
    \label{fig:errorMaxwell}
  \end{figure}
  
\begin{table}[H]
  \centering
  \begin{tabular}{ |r||p{1.7cm}|p{1.7cm}| p{1.7cm}|p{1.7cm}| }
    \hline
    \hline
    $ne$ & $~~m=5$ & $~~m=10$ & $~~~m=15$\\
    \hline
    4184     & 5.3632e-4 & 5.1116e-6  &~~4.9147e-8\\
    31024     & 1.3139e-3 &  9.5859e-6 &~~1.0647e-7\\
    238688     & 1.5370e-3 &  2.0646e-5 &~~6.9724e-7\\
    1872064     & 1.9448e-3 & 4.9251e-5  &~~2.1185e-6\\
    \hline
   \end{tabular}
   \caption{$\sup_{\mu\in\mathcal{P}}\|\bm{u}_\mu-\hat{\bm{u}}_\mu\|_{H({\rm curl})}/\|\bm{u}_\mu\|_{H({\rm curl})}$ of ${\rm RCGBM}(\mathbb{B},\mathcal{P},(1,1),(1,2),m)$.}\label{tab:errorMaxwell}
   \end{table}

\subsection{Convection-Diffusion Problem}
In the second example, we use the linear Lagrange element to compute the high-fidelity numerical solution $u_\mu$  of  the convection-diffusion equation in Subsection \ref{subsecConvectionDiffusion} with $\bm{b}=(1,-2)$ and $f=10$ on $\Omega=[-1,1]^2$. Let $\mathbb{A}_1$ be the linear finite element stiffness matrix of the Poisson equation. We run ${\rm RKBM}_1(\mathbb{B},\mathbb{M},\mathcal{P},\mu_0,\mu_1,m)$ and ${\rm RKBM}_2(\mathbb{B},\mathcal{P},\mu_0,\mu_1,m)$ for this problem with $\mathbb{B}=\mathbb{A}^{-1}_1$, $\mathbb{M}=\mathbb{A}_1$, $\mathcal{P}=(0.4:0.4:2)\times(0:\frac{2\pi}{5}:2\pi)$
to compute the reduced basis solutions $\hat{u}_\mu$ for each $\mu\in\mathcal{P}$.

As shown in Figures \ref{fig:errorConvectionRKBM1}, \ref{fig:errorConvectionRKBM2} and Tables \ref{tab:errorConvectionRKBM1}, \ref{tab:errorConvectionRKBM2} that algorithms ${\rm RKBM}_1$ and ${\rm RKBM}_2$ maintain quite similar level of accuracy. It is also observed from Figures \ref{fig:errorConvectionRKBM1} and \ref{fig:errorConvectionRKBM2} that the numerical error $|u_\mu-\hat{u}_\mu|_{H^1(\Omega)}$ converges very fast as the number $m$ of Krylov basis grows. In addition,  we note that $\hat{u}_\mu$ is more accurate for those $\mu=(\nu_1,\nu_2)$ with larger $\nu_1$ and small $|\cos(\nu_2)|$. The reason is that the preconditioner $\mathbb{B}$ is more efficient for diffusion-dominated problems. 
 \begin{figure}[H]
    \centering
\includegraphics[width=10cm,height=6cm]{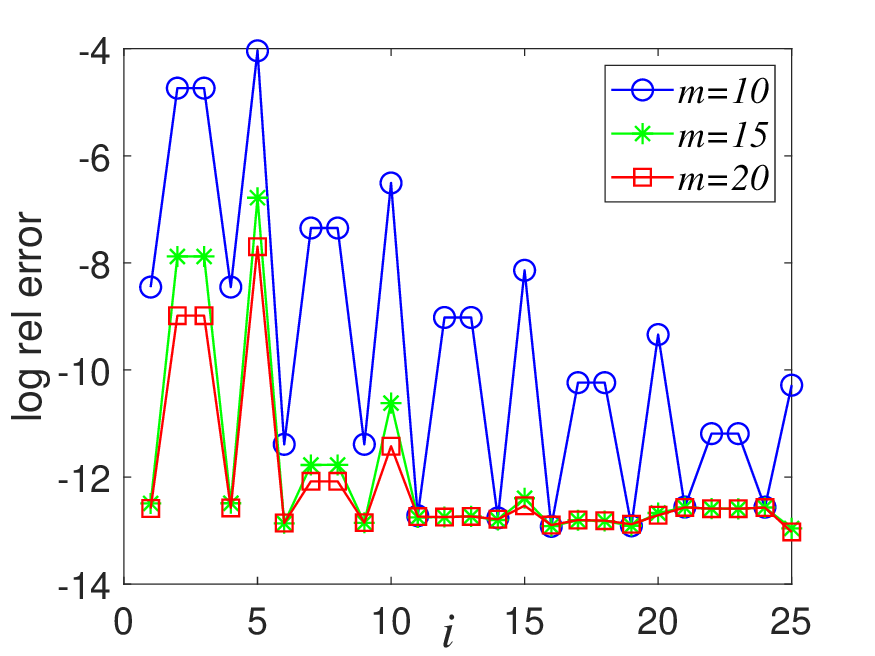}
    \caption{$\log_{10}(|u_{\tilde{\mu}_i}-\hat{u}_{\tilde{\mu}_i}|_{H^1(\Omega)}/|u_{\tilde{\mu}_i}|_{H^1(\Omega)})$ of ${\rm RKBM}_1(\mathbb{B},\mathbb{M},\mathcal{P},(1,\frac{\pi}{2}),(1,0),m)$ for   $\tilde{\mu}_i\in\mathcal{P}$, where $\tilde{\mu}_1=(0.4,0), \ldots, \tilde{\mu}_{25}=(2,2\pi)$ are ordered in lexicographic way, $nv=263169$.}
    \label{fig:errorConvectionRKBM1}
  \end{figure}

\begin{table}[H]
  \centering
  \begin{tabular}{ |r||p{1.9cm}|p{1.9cm}|p{1.9cm}|p{1.9cm} }
    \hline
    \hline
    $nv$ & $~~~m=10$ & $~~~m=15$ & $~~~~m=20$ \\
    \hline
    4225     &~~8.9744e-5 &~~1.7730e-7  &~~~9.1939e-9 \\
    16641     &~~9.0955e-5 &~~1.6408e-7 &~~~9.4322e-9\\
    66049     &~~9.1260e-5 &~~1.6441e-7 &~~~1.2664e-8 \\
    263169     &~~9.1336e-5 &~~1.6472e-7  &~~~2.0095e-8 \\
    \hline
   \end{tabular}
   \caption{$\sup_{\mu\in\mathcal{P}}|u_\mu-\hat{u}_\mu|_{H^1(\Omega)}/|u_\mu|_{H^1(\Omega)}$ of ${\rm RKBM}_1(\mathbb{B},\mathbb{M},\mathcal{P},(1,\frac{\pi}{2}),(1,0),m)$.}\label{tab:errorConvectionRKBM1}
   \end{table}

\begin{figure}[H]
    \centering
\includegraphics[width=10cm,height=6cm]{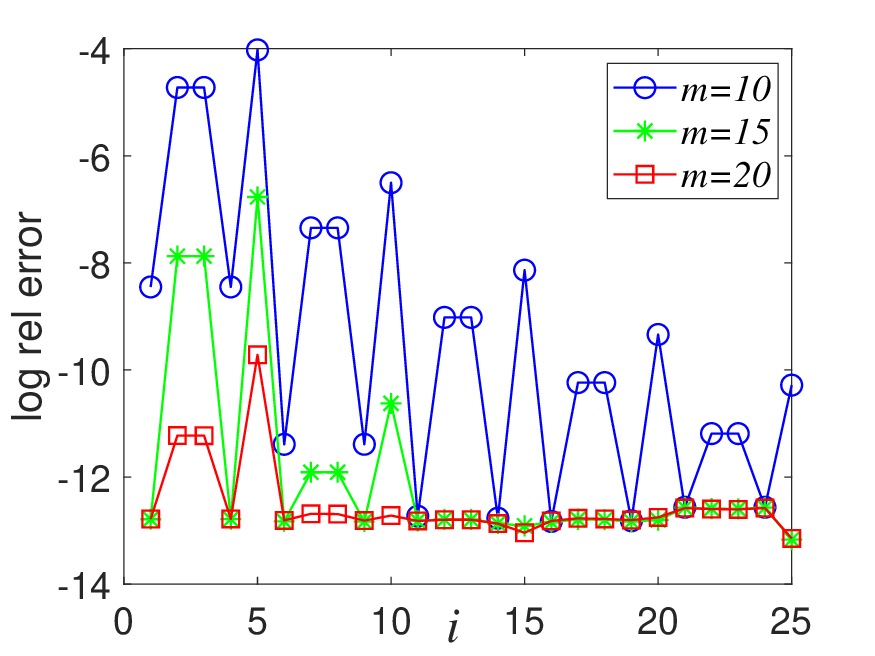}
    \caption{$\log_{10}(|u_{\tilde{\mu}_i}-\hat{u}_{\tilde{\mu}_i}|_{H^1(\Omega)}/|u_{\tilde{\mu}_i}|_{H^1(\Omega)})$ of ${\rm RKBM}_2(\mathbb{B},\mathcal{P},(1,\frac{\pi}{2}),(1,0),m)$ for  $\tilde{\mu}_i\in\mathcal{P}$, where $\tilde{\mu}_1=(0.4,0), \ldots, \tilde{\mu}_{25}=(2,2\pi)$ are ordered in lexicographic way, $nv=263169$.}
    \label{fig:errorConvectionRKBM2}
  \end{figure}

\begin{table}[H]
  \centering
  \begin{tabular}{ |r||p{1.9cm}|p{1.9cm}|p{1.9cm}|p{1.9cm} }
    \hline
    \hline
    $nv$ & $~~~m=10$ & $~~~m=15$ & $~~~~m=20$ \\
    \hline
    4225     &~~9.3232e-5 &~~1.8403e-7  &~~~6.4488e-10 \\
    16641     &~~9.4535e-5 &~~1.6982e-7 &~~~6.6489e-10\\
    66049     &~~9.4850e-5 &~~1.6981e-7 &~~~4.9609e-10 \\
    263169     &~~9.4968e-5 &~~1.7000e-7  &~~~1.9145e-10 \\
    \hline
   \end{tabular}
   \caption{$\sup_{\mu\in\mathcal{P}}|u_\mu-\hat{u}_\mu|_{H^1(\Omega)}/|u_\mu|_{H^1(\Omega)}$ of ${\rm RKBM}_2(\mathbb{B},\mathcal{P},(1,\frac{\pi}{2}),(1,0),m)$.}\label{tab:errorConvectionRKBM2}
   \end{table}

\subsection{Linear Elasticity}
The third example is devoted to  the Navier--Lamé equation for linear elasticity, as introduced in Subsection \ref{subseclinearelasticity}, with $\bm{f}=(10,10,10)^t$ on the unit cube $\Omega=[0,1]^3$. We use the linear Lagrange element to compute the FEM solution $\bm{u}_{\mu}\in\mathcal{H}\subset [H_0^1(\Omega)]^3$ for $\mu = (\nu_1, \nu_2) \in \mathcal{P} = 1 \times (0.05:0.01:0.3).$ All errors are measured in the energy norm $$|\!|\!|\bm{v}|\!|\!| = \sqrt{a_\mu(\bm{v},\bm{v})},\quad\bm{v}\in\mathcal{H},$$
where $a_{\mu}(\cdot, \cdot)$ is defined in \eqref{linearElasticity}.

Let $e_i$ denote the $i$-th unit vector in $\mathbb{R}^3$ and $\{\phi_k\}$ be a set of scalar-valued nodal basis functions.
The stiffness matrix $\mathbb{A}(\mu)=(\mathbb{A}_{ij}(\mu))_{1\leq i,j\leq3}$ is a $3\times3$ block diagonal matrix, where the $(k,l)$-entry of the block $\mathbb{A}_{ij}(\mu)$ is $a_\mu(\phi_l e_i,\phi_ke_j)$.
First we investigate the performance of ${\rm RCGBM}(\mathbb{B},\mathcal{P},\mu_0,\mu_1,24)$ for linear elasticity, where $\mu_0 = (1, 0.05)$, $\mu_1 = (1, 0.1)$, and $\mathbb{B}$ is a block diagonal preconditioner given by
\begin{equation*}
    \mathbb{B} = {\rm blkdiag}(\mathbb{A}_{11}(\mu_0)^{-1}, \mathbb{A}_{22}(\mu_0)^{-1}, \mathbb{A}_{33}(\mu_0)^{-1}).
\end{equation*}
Simple calculation shows that each $\mathbb{A}_{ii}(\mu_0)$ is spectrally equivalent to the discrete Laplacian from linear finite elements.
Although $\mathbb{B}$ is an efficient preconditioner in the framework of operator preconditioning (cf.~\cite{LoghinWathen2004,MardalWinther2011}), as illustrated in Figure \ref{fig:elasticity}, the numerical error $|\!|\!|u_{\mu} - \hat{u}_{\mu}|\!|\!|$ does not meet the desired accuracy. 

To improve the accuracy, we then call ${\rm mRCGBM}(\mathbb{B},\mathcal{P},\mu_0,\mu_1,\ldots,\mu_L,m)$ in Section \ref{secmultipara}  and construct the reduced basis subspace as the sum of several Krylov subspaces. In particular, we run ${\rm mRCGBM}(\mathbb{B},\mathcal{P},\mu_0,\mu_1,\ldots,\mu_L,m)$ with $\mu_2 = (1, 0.25)$, $\mu_3 = (1, 0.3)$ and $L=2, 3$. When $L=1$, ${\rm mRCGBM}(\mathbb{B},\mathcal{P},\mu_0,\mu_1,\ldots,\mu_L,m)$ reduces to ${\rm RCGBM}(\mathbb{B},\mathcal{P},\mu_0,\mu_1,m)$. 
From Figure \ref{fig:elasticity}, it is evident that the RBMs based on the sum of multiple Krylov subspaces outperform the RCGBM in a single Krylov subspace.
As shown in Figure \ref{fig:elasticity} and Table \ref{tab:errorElasticity}, the strategy of constructing the reduced basis subspace with additional Krylov spaces is efficient; it allows a small subspace to provide a solution with reasonable accuracy.

\begin{figure}[H]
    \centering
    \includegraphics[width=10cm,height=6cm]{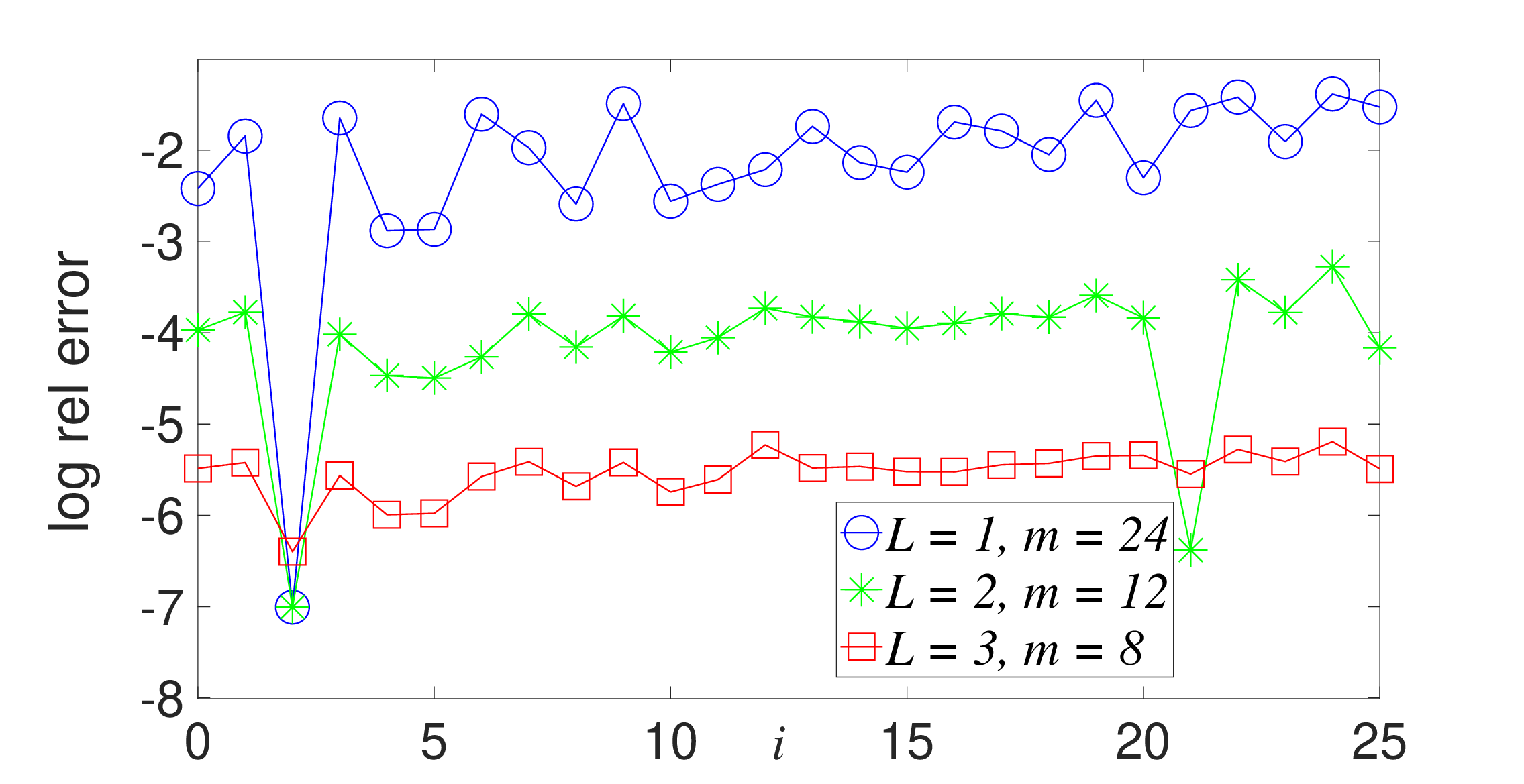}
    \label{fig:elasticity}
    \caption{$\log_{10}(|\!|\!|u_{\tilde{\mu}_i}-\hat{u}_{\tilde{\mu}_i}|\!|\!|/|\!|\!|u_{\tilde{\mu}_i}|\!|\!|)$ of ${\rm mRCGBM}(\mathbb{B},\mathcal{P},\mu_0,\mu_1,24)$, \\${\rm mRCGBM}(\mathbb{B}, \mathcal{P}, \mu_0,\mu_1,\mu_2, 12)$, ${\rm mRCGBM}(\mathbb{B},\mathcal{P},\mu_0,\mu_1,\mu_2,\mu_3, 4)$, where $\tilde{\mu}_i \in \mathcal{P}$, $nv = 274625$.}
    
\end{figure}

   \begin{table}[H]
  \centering
  \begin{tabular}{ |r||p{1.9cm}|p{1.9cm}|p{1.9cm}|p{1.9cm} }
    \hline
    \hline
    $nv$ & $~~~m=4$ & $~~~m=6$ & $~~~~m=8$ \\
    \hline
    4913     &~~7.9519e-4 &~~6.7498e-5  &~~~6.7275e-6 \\
    35937     &~~8.9282e-4 &~~7.3524e-5 &~~~6.4003e-6\\
    274625     &~~9.1439e-4 &~~7.7088e-5 &~~~6.4091e-6\\
    \hline
   \end{tabular}
   \caption{$\sup_{\mu\in\mathcal{P}}|\!|\!|u_\mu-\hat{u}_\mu|\!|\!|/|\!|\!|u_\mu|\!|\!|$ of ${\rm mRCGBM}(\mathbb{B},\mathcal{P}_0, \mu_0, \mu_1, \mu_2, \mu_3, m)$, $m = 4,6,8$.}
   \label{tab:errorElasticity}
   \end{table}

\subsection{Elliptic Equations with Piecewise Constant Coefficients}\label{subsecpwcoeff}
In the last experiment, we consider the linear finite element method \eqref{multiparaelliptic} for the second order elliptic equation based on a uniform grid of $\Omega=[-1,1]\times[-1,1]$ with 1050625 vertices. The source term $f=10$ and the coefficient $\alpha_\mu$ with $\mu=(\nu_1,\nu_2,\nu_3,\nu_4)$ is given in Figure \ref{fig:pwcoeff}. Let $\mu_0=(1,1,1,1)$, $\mu_1=(1,2,3,4)$, $\mu_2=(1,2,1,4)$, $\mu_3=(1,1,2,4)$ and $\mathcal{P}=(1:1:3)\times(1:1:3)\times(1:1:3)\times(1:1:3)$. Setting  $\mathbb{B}=\mathbb{A}(\mu_0)^{-1}$ as the  preconditioner, we run ${\rm mRCGBM}(\mathbb{B},\mathcal{P},\mu_0,\ldots,\mu_L,m)$ for \eqref{multiparaelliptic} with $\mu\in\mathcal{P}$, $m=5, 10, 15$, and $L=1, 2, 3$.

\begin{table}[H]
  \centering
  \begin{tabular}{ |r||p{1.9cm}|p{1.9cm}|p{1.9cm}|p{1.9cm} }
    \hline
    \hline
    $L$ & $~~~m=5$ & $~~~m=10$ & $~~~~m=15$ \\
    \hline
    1     &~~3.4926e-1 &~~2.4486e-1  &~~~2.4492e-1 \\
    2     &~~1.1540e-1 &~~4.0727e-3 &~~~2.6151e-4\\
    3     &~~2.7279e-2 &~~1.2793e-6 &~~~2.1137e-8\\
    \hline
   \end{tabular}
   \caption{$\sup_{\mu\in\mathcal{P}_0}|u_\mu-\hat{u}_\mu|_{H^1(\Omega)}/|u_\mu|_{H^1(\Omega)}$ of ${\rm mRCGBM}(\mathbb{B},\mathcal{P}_0,\mu_0,\mu_1,\ldots,\mu_L,m)$.}\label{tab:errorpwcoeffRCGBM}
   \end{table}

\begin{figure}[H]
    \centering
\includegraphics[width=10cm,height=6cm]{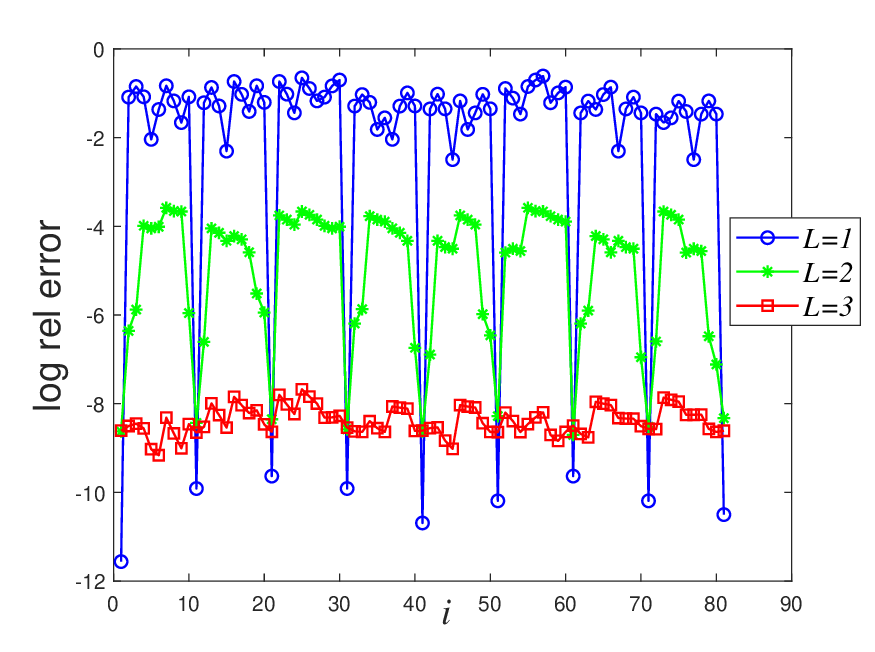}
    \caption{$\log_{10}(|u_{\tilde{\mu}_i}-\hat{u}_{\tilde{\mu}_i}|_{H^1(\Omega)}/|u_{\tilde{\mu}_i}|_{H^1(\Omega)})$ of ${\rm mRCGBM}(\mathbb{B},\mathcal{P}_0,\mu_0,\ldots,\mu_L,15)$ for  $\tilde{\mu}_i\in\mathcal{P}$, where $\tilde{\mu}_1=(1,1,1,1), \ldots, \tilde{\mu}_{25}=(3,3,3,3)$ are ordered in lexicographic way, $nv=1050625$.}
    \label{fig:errorpwcoeffRCGBM}
  \end{figure}

It is observed from Table \ref{tab:errorMaxwell} that ${\rm mRCGBM}(\mathbb{B},\mathcal{P},\mu_0,\mu_1,\mu_2,\mu_3,m)$ built upon three Krylov spaces converges fast. In comparison, ${\rm mRCGBM}(\mathbb{B},\mathcal{P},\mu_0,\mu_1,\mu_2,m)$ is able to maintain reasonable accuracy, while ${\rm mRCGBM}(\mathbb{B},\mathcal{P},\mu_0,\mu_1,m)$ is not convergent at all as $m$ grows. As shown in Figure \ref{fig:errorpwcoeffRCGBM}, ${\rm mRCGBM}(\mathbb{B},\mathcal{P},\mu_0,\mu_1,\mu_2,\mu_3,m)$ is uniformly accurate for all $\mu\in\mathcal{P}$. On the other hand, the error $|u_\mu-\hat{u}_\mu|_{H^1(\Omega)}$ is only accidentally small for a few parameters $\mu\in\mathcal{P}$.

\bibliographystyle{siamplain}

\end{document}